\newtheorem{theorem}{Theorem}[section]
\newtheorem{lemma}{Lemma}[section]
\newtheorem{remark}{Remark}[section]
\newcommand{\T}{\ensuremath{ \mathbb R^{n_1\times \cdots\times n_d}   }}
\newcommand{\bigxiaokuohao}[1]{\ensuremath{ \left(  #1 \right) }}      
\newcommand{\bigjueduizhi}[1]{\ensuremath{ \left|  #1 \right| }}   
\newcommand{\bigdakuohao}[1]{\ensuremath{ \left\{  #1 \right\} }}
\newcommand{\normalxiaokuohao}[1]{\ensuremath{  (  #1  ) }}
\newcommand{\bigfnorm}[1]{\ensuremath{ \left\|   #1 \right\|_F }}    
\newcommand{\bignorm}[1]{\ensuremath{ \left\|   #1 \right\|  }}        
\newcommand{\normalnorm}[1]{\ensuremath{  \|   #1  \|  }}     
\newcommand{\bigzeronorm}[1]{\ensuremath{ \left\|   #1 \right\|_0  }}        
\newcommand{\bigonenorm}[1]{\ensuremath{ \left\|   #1 \right\|_1  }}
\newcommand{\innerprod}[2]{\ensuremath{ \left\langle   #1 , #2\right\rangle }}
\title{Practical Approximation Algorithms for $\ell_1$-Regularized Sparse   Rank-$1$ Approximation to Higher-Order Tensors}
\author{Xianpeng Mao\thanks{School of Physical Science and Technology, Guangxi University, Nanning, 530004, China.} \and Yuning Yang\thanks{College of Mathematics and Information Science, Guangxi University, Nanning, 530004, China.} \thanks{Corresponding author. Email:  yyang@gxu.edu.cn.}}
\begin{document} 
\maketitle


\begin{abstract}

	Two approximation algorithms are proposed for $\ell_1$-regularized sparse  rank-1 approximation to higher-order tensors. The algorithms are based on multilinear relaxation and sparsification, which are easily implemented and well scalable.  
	In particular, the second one scales linearly with the size of the input tensor. Based on a careful estimation of the $\ell_1$-regularized sparsification, theoretical approximation lower bounds are derived. Our theoretical results also suggest an explicit way of choosing the regularization parameters. Numerical examples are provided to verify the proposed algorithms. 
	
	\noindent {\bf Key words: } tensor; sparse;  $\ell_1$ regularization; rank-1 approximation; approximation algorithm; approximation bound\\
	\hspace{2mm}\vspace{3mm}
	
\end{abstract}

\thispagestyle{plain} \markboth{X. Mao \and Y. Yang}{Approximation Algorithms for Sparse Tensor BR1Approx}

\section{Introduction}

The goal of sparse rank-$1$ approximation to higher-order tensors is to find a sparse rank-$1$ tensor that is as close as possible to the given data tensor in the sense of Euclidean distance. It can be seen as a sparse generalization of the tensor best rank-$1$ approximation \cite{de2000on,qi2018tensor} and a higher-order extension of the  matrix sparse SVD \cite{witten2009a}. 
sparse rank-$1$ approximation to  tensors is important in higher-order PCA \cite{allen2012sparse},  co-clustering \cite{papalexakis2012k}, sparse tensor decomposition \cite{madrid2017tensor,sun2017provable},    and sparse tensor regression  \cite{sun2017store}; just to name a few.

To encourage sparsity for the tensor rank-$1$ approximation problem,  existing literature   employed the $\ell_0$-constraint \cite{sun2017provable,sun2017store} or the $\ell_1$-regularization \cite{allen2012sparse,papalexakis2012k,madrid2017tensor}. 
Due to the separable structure of the problem, one usually   considers   block coordinate descent type algorithms to solve the models \cite{allen2012sparse,papalexakis2012k,madrid2017tensor,sun2017provable,sun2017store,wang2020sparse}. However, as the problem is nonconvex, it is hard to measure the solution quality for an iterative algorithm, if we have no a priori information about the data tensor. In the context of sparse matrix  PCA and SVD, approximation algorithms with provable lower bounds have drawn much attention during the past decade; see, e.g., \cite{d2014approximation,aspremont2007a,chan2016approximability}. Approximation algorithms have also been developed for non-sparse rank-$1$ approximation to tensors; see, e.g, \cite{he2010approximation,he2012probability,zhang2012cubic}. Very recently, approximation algorithms were designed for $\ell_0$-constrained sparse rank-$1$ approximation to tensors \cite{mao2022several}. The interest in approximation algorithms is not only on themselves, but also that they can generalize high-quality feasible solutions as initializers for iterative algorithms.

In view of the above, however,   in the $\ell_1$-regularized setting, there  still lacks  approximation algorithms in the literature.   This paper attempts to study this point. In the matrix setting, existing work on sparse matrix  PCA usually prefers convex relaxations \cite{d2014approximation,aspremont2007a}; however, they do not scale well; recent work \cite{bertsimas2022solving} formulated matrix sparse PCA as a mixed-integer semidefinite program that can deal with large-scale problems, but this approach cannot be generalized to our setting due to the multilinearity of the considered problem and the $\ell_1$-regularization term. In contrast to these approaches, in this paper, we 
extend the idea of multilinear relaxation  \cite{he2010approximation,mao2022several} to our setting and combine it with the $\ell_1$-regularization induced subproblems. Although the multilinear relaxation is nonconvex, it indeed enables us to design   easily implemented and well scalable algorithms, where the second one, in particular, scales linearly with the size of the input tensor. For a $d$-th order tensor, $\frac{ \prod^d_{j=1}\bigxiaokuohao{1-\omega_j\sqrt{n_j} +\omega_j }   }{ \sqrt{ \prod^{d-1}_{j=2} n_j } }$-- and $\frac{ \prod^d_{j=1}\bigxiaokuohao{1-\omega_j\sqrt{n_j} +\omega_j }   }{ \sqrt{ \prod^{d-1}_{j=1} n_j } }$--lower bounds are established when $\omega_j < 1/\sqrt{n_j}$, where $\omega_j$'s are the regularization parameters. Although extending the multilinear relaxation to the $\ell_1$-regularized setting seems to be straightforward, the sparsification needs to be specifically designed and
the analysis is somewhat more involved than that of \cite{he2010approximation} for the non-sparse case and \cite{mao2022several} for the $\ell_0$-constrained case. This is because we  have to incorporate the $\ell_1$-regularization term into the lower bound analysis, which requires more careful estimation. Besides, our theoretical results also suggest  a direct way of choosing the regularization parameters, which is  preliminarily confirmed by numerical observations.

The remainder is organized as follows. Sect. \ref{sec:model} introduces the sparse rank-$1$ approximation models while approximation algorithms and approximation bounds are presented in Sect. \ref{sec:approx_alg}. Numerical examples are provided in Sect. \ref{sec:numer_experiments}. Sect. \ref{sec:conclusions} draws some conclusions. 

{\bf Notation.} vectors are written as  $(\mathbf x,\mathbf y,\ldots)$, matrices
correspond to  $(A,B,\ldots)$, and tensors are
written as $(\mathcal{A}, \mathcal{B},
\cdots)$. $\mathbb R^{n_1\times \cdots\times n_d}$ denotes the space of $n_1\times\cdots\times n_d$ real tensors. 
The inner product $\langle \mathcal A,\mathcal B\rangle$ between    two tensors $\mathcal A,\mathcal B$ of the same size is
the sum of entry-wise product. 
The Frobenius  norm of $\mathcal A$ is  $\|\mathcal A\|_F = \langle\mathcal A,\mathcal A\rangle^{1/2}$. 
$\circ$ denotes the outer product and for $\mathbf x_j\in\mathbb R^{n_j}$, $j=1,\ldots,d$, $\mathbf x_1\circ\cdots\circ\mathbf x_d$ denotes a rank-1 tensor in $\T$.  For a vector $\mathbf x$, $\|\mathbf x\|$ is the usual Euclidien norm,   $\|\mathbf x\|_1$ is the $\ell_1$ norm, and $\|\mathbf x\|_0$ is the $\ell_0$ semi-norm; $\bigjueduizhi{\mathbf x}$ means that every entry of $\mathbf x$ takes its absolute value.

\section{$\ell_1$-Regularized Sparse Tensor  Rank-$1$ Approximation} \label{sec:model}

Given  $\mathcal A \in\T$ with $d\geq 3$, the $\ell_0$-constrained sparse tensor rank-$1$ approximation consists of finding a set of sparse vectors $\mathbf x_1,\ldots,\mathbf x_d$
\cite{sun2017provable,sun2017store,mao2022several}:
\begin{equation}
	\label{prob:str1approx_org}
	\begin{split}
		&\min~\bigfnorm{\lambda\cdot\mathbf x_1\circ\cdots\circ \mathbf x_d- \mathcal A}^2\\
		&~~{\rm s.t.}~ \lambda\in\mathbb R, ~\mathbf x_j\in\mathbb R^{n_j},~\bignorm{\mathbf x_j}=1,~\bigzeronorm{\mathbf x_j}\leq r_j,~ 1\leq j\leq d,
	\end{split}
\end{equation}
where $r_j$'s are parameters. $\lambda$ above can be eliminated and \eqref{prob:str1approx_org} is  equivalent to the following maximization problem \cite{mao2022several}:
\begin{equation}
	\label{prob:str1approx_org_max_l0}
	\begin{split}
		&\max  ~ \innerprod{\mathcal A}{\mathbf x_1\circ\cdots\circ\mathbf x_d}\\
		&~~{\rm s.t.}~ \bignorm{\mathbf x_j}=1,~\bigzeronorm{\mathbf x_j}\leq r_j, 1\leq j\leq d.
	\end{split}
\end{equation}

In another thread, the   $\ell_1$-regularized version of the problem takes the following form \cite{allen2012sparse} $(d=3)$:
\begin{equation}
	\label{prob:str1approx_org_max_l1_regularized_inequality_constraint}
	\begin{split}
		&\max ~ \innerprod{\mathcal A}{\mathbf x_1\circ\cdots\circ\mathbf x_d} - \sum^d_{j=1}\nolimits \omega_j\bigonenorm{\mathbf x_j}\\
		&~~{\rm s.t.}~~ \mathbf x_j\in\mathbb R^{n_j},~\bignorm{\mathbf x_j}\leq 1,~1\leq j\leq d,
	\end{split}
\end{equation}
where $\omega_j\geq 0$ denotes the regularization parameter. $\bigonenorm{\mathbf x_j}$ can be replaced by $\bigonenorm{D\mathbf x_j}$ where $D$ is a structure matrix \cite{madrid2017tensor}.  \cite{wang2020sparse} considered solution methods for the following two models $(d=3)$:
\begin{equation}
	\label{prob:str1approx_org_l1_regularized_inequality_constraint}
	\min_{\mathbf x_j\in\mathbb R^{n_j},1\leq j\leq d}\nolimits~ \frac{1}{2}\bigfnorm{\mathbf x_1\circ\cdots\circ \mathbf x_d- \mathcal A}^2 + \sum^d_{j=1}\nolimits \omega_j\bigonenorm{\mathbf x_j};
\end{equation}
\begin{equation}
	\label{prob:str1approx_org_l1_regularized_equality_constraint}
	\begin{split}
		&\min ~  \frac{1}{2}\bigfnorm{\lambda\cdot\mathbf x_1\circ\cdots\circ \mathbf x_d- \mathcal A}^2 + \sum^d_{j=1}\nolimits \omega_j\bigonenorm{\mathbf x_j}\\
		&~~{\rm s.t.}~  \lambda\in\mathbb R,~\mathbf x_j\in\mathbb R^{n_j},~\bignorm{\mathbf x_j}= 1,1\leq j\leq d.
	\end{split}
\end{equation}

In this paper,  we focus on the $\ell_1$-regularized problem of the following form:
\begin{equation}
	\label{prob:str1approx_org_max_l1_regularized}
	\begin{split}
		&\max ~ \innerprod{\mathcal A}{\mathbf x_1\circ\cdots\circ\mathbf x_d} - \sum^d_{j=1}\nolimits \omega_j\bigonenorm{\mathbf x_j}\\
		&~~{\rm s.t.}~~ \mathbf x_j\in\mathbb R^{n_j},~\bignorm{\mathbf x_j}= 1, ~1\leq j\leq d,
	\end{split}
\end{equation}
which is a slight modification of \eqref{prob:str1approx_org_max_l1_regularized_inequality_constraint} with   the ball constraints replaced   by spherical ones. This is for better designing and analyzing the approximation algorithms, and it can avoid zero solutions.

Note that \eqref{prob:str1approx_org_max_l1_regularized} has close relations with the aforementioned problems. Firstly, it is a $\ell_1$-regularized version of \eqref{prob:str1approx_org_max_l0}. Secondly, if $\omega_j$'s are chosen small enough, then the ball constrained model \eqref{prob:str1approx_org_max_l1_regularized_inequality_constraint} boils down to \eqref{prob:str1approx_org_max_l1_regularized}, as the maximizer always lies on the sphere in such a case. Thirdly, considering the spherical constraints, if one minimizes \eqref{prob:str1approx_org_l1_regularized_equality_constraint} with respect to $\lambda$, one obtains $\lambda = \innerprod{\mathcal A}{\mathbf x_1\circ\cdots\circ\mathbf x_d}$. Substituting this into \eqref{prob:str1approx_org_l1_regularized_equality_constraint} gives the objective function $1/2\innerprod{\mathcal A}{\mathbf x_1\circ\cdots\circ\mathbf x_d}^2 - \sum^d_{j=1}\omega_j\bigonenorm{\mathbf x_j}$ (if the $\min$ is replaced by $\max$), which plays a similar role as that in \eqref{prob:str1approx_org_max_l1_regularized}.

\section{Approximation Algorithms and   Approximation Bounds}\label{sec:approx_alg}

We need some preparations before presenting the algorithms. 
For any $\mathbf a\in\mathbb R^n$ and given that $\omega\geq 0$, it is known that the solution to the following problem
\[
\min_{\mathbf x\in\mathbb R^n} \frac{1}{2}\bignorm{\mathbf x-\mathbf a}^2 + \omega \bigonenorm{\mathbf x}
\]
is given by
\begin{equation*}
	\label{eq:l1_prox_tmp}
	S(\mathbf a,\omega) := {\rm sgn}(\mathbf a)\circledast \bigxiaokuohao{ |\mathbf a|- \omega\mathbf e }_+ ,
\end{equation*}
where ${\rm sgn}(\cdot)$ is the sign function,   $\circledast$ denotes the entry-wise product,  $\mathbf e$ represents the all-one vector, and $(\cdot)_+ := \max\bigdakuohao{\cdot,\mathbf 0}$.   The following result holds. 

\begin{lemma} \label{lem:l1_prox_constrained}
	Given a nonzero vector $\mathbf a\in\mathbb R^n$ and $\omega\geq 0$,  it holds that
	\begin{equation} \label{prob:sparsify_problem}
		\begin{split}
			&	~~~~\max_{\mathbf x\in\mathbb R^n,\bignorm{\mathbf x}=1} \nolimits \innerprod{\mathbf a}{\mathbf x} - \omega \bigonenorm{\mathbf x}\\
			&= 	\left\{ \begin{array}{lr}
				\bignorm{S(\mathbf a,\omega)}
				= 	\sqrt{ \sum^n_{i=1}\bigxiaokuohao{|\mathbf{a}(i)| -\omega}_+^2  } , & \bignorm{S(\mathbf a,\omega)}\neq 0,\\
				\max_{1\leq i\leq n}|\mathbf{a}(i)|-\omega	,&\bignorm{S(\mathbf a,\omega)}=0,
			\end{array} \right. 
		\end{split}
	\end{equation}
	which is solved by
	\begin{equation}
		\label{sol:subproblem}
		\mathbf x^* = N(\mathbf a,\omega) := 	\left\{ \begin{array}{lr}
			\frac{S(\mathbf a,\omega)}{\bignorm{S(\mathbf a,\omega)}}, & \bignorm{S(\mathbf a,\omega)}\neq 0,\\
			{\rm sgn}( \mathbf{a}(\hat{i})) \cdot\mathbf e_{\hat i}	,&\bignorm{S(\mathbf a,\omega)}=0,
		\end{array} \right. 
	\end{equation}
	where $\mathbf{a}(i)$ means the $i$-th entry of $\mathbf a$, $\hat i$ is any index such that $|\mathbf{a}(\hat i)|=  \max_{1\leq i\leq n}{ |\mathbf{a}(i)| }$,   $\mathbf e_{\hat i}$ denotes the $\hat i$-th vector of the standard basis of $\mathbb R^n$, and we set ${\rm sgn}( \mathbf{a}(\hat i)) = 1$ if $\mathbf{a}(\hat i)=0$.
\end{lemma}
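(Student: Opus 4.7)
The plan is to reduce \eqref{prob:sparsify_problem} to a linear maximization on the nonnegative portion of the unit sphere via a sign-alignment argument, and then solve the reduced problem by Cauchy--Schwarz in the generic case and by a vertex argument in the degenerate case.

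First I would exploit the pointwise bound $\mathbf{a}(i)\mathbf{x}(i) - \omega|\mathbf{x}(i)| \leq (|\mathbf{a}(i)|-\omega)|\mathbf{x}(i)|$, which is tight exactly when $\mathrm{sgn}(\mathbf{x}(i))=\mathrm{sgn}(\mathbf{a}(i))$ or $\mathbf{x}(i)=0$. This lets the search be restricted to $\mathbf{x} = \mathrm{sgn}(\mathbf{a})\circledast\mathbf{y}$ with $\mathbf{y}\geq\mathbf{0}$ and $\|\mathbf{y}\|=1$, so that, setting $\mathbf{c}:=|\mathbf{a}|-\omega\mathbf{e}$, the original problem reduces to
\[
\max_{\mathbf{y}\geq\mathbf{0},\ \|\mathbf{y}\|=1} \sum_{i=1}^n \mathbf{c}(i)\,\mathbf{y}(i).
\]

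Next I would split on whether some $\mathbf{c}(i) > 0$, i.e., whether $\|S(\mathbf{a},\omega)\|\neq 0$. In that case, nonnegativity of $\mathbf{y}$ and Cauchy--Schwarz yield $\sum_i \mathbf{c}(i)\mathbf{y}(i)\leq \sum_i(\mathbf{c}(i))_+\mathbf{y}(i)\leq \|\mathbf{c}_+\|\cdot\|\mathbf{y}\| = \|S(\mathbf{a},\omega)\|$, with equality at $\mathbf{y}=\mathbf{c}_+/\|\mathbf{c}_+\|$, which translates back to $\mathbf{x}^*=S(\mathbf{a},\omega)/\|S(\mathbf{a},\omega)\|$. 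Otherwise every $\mathbf{c}(i)\leq 0$; using $\mathbf{c}(i)\leq c^*:=\max_j\mathbf{c}(j)$ on the nonnegative coordinates of $\mathbf{y}$, together with the elementary fact $\sum_i\mathbf{y}(i)\geq\|\mathbf{y}\|=1$ for $\mathbf{y}\geq\mathbf{0}$, and $c^*\leq 0$, gives $\sum_i\mathbf{c}(i)\mathbf{y}(i)\leq c^*\sum_i\mathbf{y}(i)\leq c^* = \max_i|\mathbf{a}(i)|-\omega$, with equality precisely at a standard basis vector $\mathbf{y}=\mathbf{e}_{\hat i}$ with $\hat i\in\arg\max_i|\mathbf{a}(i)|$.

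The main subtlety lies in the degenerate case: Cauchy--Schwarz collapses when $\mathbf{c}_+=\mathbf{0}$, the maximum becomes nonpositive, and the optimizer jumps from a shrinkage direction to a coordinate vertex, so this regime cannot be absorbed into the proximal-style argument of the first case and must be handled by the separate chain of inequalities above. The hypothesis $\mathbf{a}\neq\mathbf{0}$ ensures $|\mathbf{a}(\hat i)|>0$ throughout, so the convention $\mathrm{sgn}(0)=1$ in the statement is never actually triggered and is included only to make $N(\mathbf{a},\omega)$ unambiguously defined.
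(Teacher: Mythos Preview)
Your proof is correct and shares the paper's overall structure: both begin with the sign-alignment reduction $\langle\mathbf a,\mathbf x\rangle-\omega\|\mathbf x\|_1\leq\langle|\mathbf a|-\omega\mathbf e,|\mathbf x|\rangle$ and then split on whether $S(\mathbf a,\omega)$ vanishes. The difference lies in how the nondegenerate case is resolved. The paper observes that when $S(\mathbf a,\omega)\neq 0$ the maximum is strictly positive, so the spherical constraint can be relaxed to the ball $\|\mathbf x\|\leq 1$, after which it invokes an external result (Luss--Teboulle, Proposition 4.6) for the ball-constrained problem. You instead stay on the sphere and use Cauchy--Schwarz directly on $\mathbf c_+$, which is more elementary and fully self-contained. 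In the degenerate case the paper simply asserts that $|\mathbf x^*|=\mathbf e_{\hat i}$ is optimal when $|\mathbf a|-\omega\mathbf e\leq\mathbf 0$, whereas you supply the explicit chain $\langle\mathbf c,\mathbf y\rangle\leq c^*\|\mathbf y\|_1\leq c^*$ with the equality analysis; this is a useful addition. One minor wording point: your parametrization $\mathbf x=\mathrm{sgn}(\mathbf a)\circledast\mathbf y$ is slightly awkward on coordinates where $\mathbf a(i)=0$, but since your actual argument only needs $\mathbf y=|\mathbf x|$ for the upper bound and then verifies attainment at the explicit $\mathbf x^*$, this does not affect correctness.
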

\begin{proof}For any maximizer $\mathbf x^*$  to the problem $	\max_{\mathbf x\in\mathbb R^n,\bignorm{\mathbf x}=1} \nolimits \innerprod{\mathbf a}{\mathbf x} - \omega \bigonenorm{\mathbf x}$,  we first have the observation that ${\rm sgn}( \mathbf{x}^*(i)) = {\rm sgn}( \mathbf{a}(i))$, otherwise the sign of $ \mathbf{x}^*(i)$ can be reversed to obtain a larger objective value. This means that $\innerprod{\mathbf a}{\mathbf x^*} = \innerprod{\bigjueduizhi{\mathbf a}}{\bigjueduizhi{\mathbf x^*}}$, and so 
	$$	\max_{\mathbf x\in\mathbb R^n,\bignorm{\mathbf x}=1} \nolimits \innerprod{\mathbf a}{\mathbf x} - \omega \bigonenorm{\mathbf x} = 	\max_{\mathbf x\in\mathbb R^n,\bignorm{\mathbf x}=1} \nolimits \innerprod{|\mathbf a|-\omega\mathbf e}{|\mathbf x|}. $$
	Whenever $S(\mathbf a,\omega)\neq \mathbf 0$, this means that $| \mathbf{a}(i)|> \omega$ for at least one $i$, and so 
	$
	\max_{\mathbf x\in\mathbb R^n,\bignorm{\mathbf x}=1} \nolimits \innerprod{\mathbf a}{\mathbf x} - \omega \bigonenorm{\mathbf x} > 0,
	$
	leading to that
	\[
	\max_{\mathbf x\in\mathbb R^n,\bignorm{\mathbf x}=1} \nolimits \innerprod{\mathbf a}{\mathbf x} - \omega \bigonenorm{\mathbf x}  = 	\max_{\mathbf x\in\mathbb R^n,\bignorm{\mathbf x}\leq 1} \nolimits \innerprod{\mathbf a}{\mathbf x} - \omega \bigonenorm{\mathbf x}, 
	\]
	whose maximizer is given by $S( {\mathbf a},\omega)/\bignorm{S( {\mathbf a},\omega)}$ according to 
	\cite[Proposition 4.6]{luss2013conditional}. 
	
	When $S(\mathbf a,\omega) = \mathbf 0$,  we get $|\mathbf a| - \omega\mathbf e\leq \mathbf 0$, implying that the maximizer is given by $|\mathbf x^*| =\mathbf e_{\hat i}$. This together with $\innerprod{\mathbf a}{\mathbf x^*} = \innerprod{|\mathbf a|}{|\mathbf x^*|}$ shows that $\mathbf x^* = {\rm sgn}(  \mathbf{a}(\hat i)) \mathbf e_{\hat i}$. Finally, the maximum of the problem can be easily derived.
\end{proof}

With the above result at hand,
we begin with the general idea on how to design the algorithms. We   use multilinear relaxation to obtain a set of non-sparse candidate vectors $\mathbf x_1,\ldots,\mathbf x_d$ from  matrix unfoldings and tensor-vector multiplications sequentially,  using certain extraction strategies. To sparsify these vectors, in view of the $\ell_1$-regularization, we solve \eqref{prob:sparsify_problem} with $\mathbf a = \mathbf x_i$ to get the   final solutions. That is to say, the sparse solutions are given by $N(\mathbf x_i,\omega)$. The designed algorithms are presented in Algorithms \ref{proc:init3_order_d} and \ref{proc:init4_order_d}, where the former employs SVD to extract the non-sparse vectors, while the latter uses certain maximal energy rule. In the algorithms, \texttt{reshape}  is the same as that in Matlab. 


\begin{boxedminipage}{0.92\textwidth}\small
	\begin{equation}  \label{proc:init3_order_d}
		\noindent {\rm Algorithm}~ (\mathbf x^{\omega_1}_1,\ldots,\mathbf x^{\omega_d}_d) =  {\rm approx\_alg\_\ell_1}(\mathcal A,\omega_1,\ldots,\omega_d) 
		\tag{V1}
	\end{equation}
	
	1. Unfold $\mathcal A$ to the matrix $A_1 = \texttt{reshape}(\mathcal A, n_1,\prod^d_{j=2}n_j)\in\mathbb R^{n_1\times \prod^d_{j=2}n_j}$ and find the solution to
	\[
	( {\mathbf x}^*_1, {\mathbf y}^*_1) \in \arg\max_{(\mathbf x_1,\mathbf y_1)\in\mathbb R^{n_1}\times \mathbb R^{ \prod^d_{j=2}n_j },\bignorm{\mathbf x_1}=\bignorm{\mathbf y_1}=1}\nolimits \mathbf x_1^\top A_1\mathbf y_1;
	\]
	denote $\mathbf x^{\omega_1}_1 := N(\mathbf x^*_1,\omega_1)$, where $N(\cdot,\cdot)$ was defined in Lemma \ref{lem:l1_prox_constrained}.
	
	2. For $j=2,\ldots,d-1$, denote  matrices $A_j := \texttt{reshape}(A_{j-1}^\top\mathbf x_{j-1}^{\omega_{j-1}},n_j,\prod^d_{k=j+1}n_k )\in\mathbb R^{n_j\times \prod^d_{k=j+1}n_k}$ and find the solution to
	\[
	( {\mathbf x}^*_j, {\mathbf y}^*_j) \in \arg\max_{(\mathbf x_j,\mathbf y_j)\in\mathbb R^{n_j}\times \mathbb R^{ \prod^d_{k=j+1}n_k },\bignorm{\mathbf x_j}=\bignorm{\mathbf y_j}=1} \nolimits\mathbf x_j^\top A_j\mathbf y_j;
	\]
	denote $\mathbf x^{\omega_j}_j := N(\mathbf x^*_j,\omega_j)$.
	
	3. Denote  $\mathbf x^*_d:= A_{d-1}^{\top}\mathbf x^{\omega_{d-1} }_{d-1}/\normalnorm{A_{d-1}^{\top}\mathbf x^{\omega_{d-1}}_{d-1}}\in\mathbb R^{n_d}$ and  compute ${\mathbf x}^{\omega_d}_d:=N(\mathbf x^*_d,\omega_d)$.
	
	4. Return $(\mathbf x^{\omega_1}_1,\ldots,\mathbf x^{\omega_d}_d)$.
\end{boxedminipage}

\begin{boxedminipage}{0.92\textwidth}\small
	\begin{equation}  \label{proc:init4_order_d}
		\noindent {\rm Algorithm}~ (\mathbf x^{\omega_1}_1,\ldots,\mathbf x^{\omega_d}_d) =  {\rm approx\_alg\_\ell_1}(\mathcal A,\omega_1,\ldots,\omega_d) 
		\tag{V2}
	\end{equation}
	
	1. Unfold $\mathcal A$ to $A_1 = \texttt{reshape}(\mathcal A, n_1,\prod^d_{j=2}n_j)$ and denote $A_1^{\bar k}$ as the row of $A_1$ having the largest magnitude, i.e., $\normalnorm{ A_1^{\bar k}}=\max_{1\leq k\leq n_1}\bigfnorm{A_1^k}$. Let $\mathbf y_1^*=\normalxiaokuohao{ A_1^{\bar k}}^{\top}$ and $
	{\mathbf x}^*_1  =  A_1\mathbf y^*_1/\bignorm{A_1\mathbf y^*_1}\in\mathbb R^{n_1}$;
	denote $\mathbf x^{\omega}_1 := N(\mathbf x^*_1,\omega_1)$.
	\\
	
	2. For $j=2,\ldots,d-1$, denote $A_j = \texttt{reshape}(A_{j-1}^\top\mathbf x_{j-1}^{\omega},n_j,\prod^d_{k=j+1}n_k )$ and denote $A_j^{\bar k}$ as the row of $A_j$ having the largest magnitude. Let  $\mathbf y^*_j=\normalxiaokuohao{ A_j^{\bar k}}^{\top}$ and $
	{\mathbf x}^*_j  =  A_j\mathbf y^*_j/\bignorm{A_j\mathbf y^*_j}\in\mathbb R^{n_j}$;
	denote $\mathbf x^{\omega_j}_j := N(\mathbf x^*_j,\omega_j)$.\\
	
	3. Denote  $\mathbf x^*_d:= A_{d-1}^{\top}\mathbf x^{\omega_{d-1} }_{d-1}/\normalnorm{A_{d-1}^{\top}\mathbf x^{\omega_{d-1}}_{d-1}} \in\mathbb R^{n_d}$ and  compute ${\mathbf x}^{\omega_d}_d:=N(\mathbf x^*_d,\omega_d)$.
	
	4. Return $(\mathbf x^{\omega_1}_1,\ldots,\mathbf x^{\omega_d}_d)$.
\end{boxedminipage}

It can be checked that the computational complexity of Algorithm \ref{proc:init3_order_d} is dominated by $(n_1^2 n_2\cdots n_d)$ and that of Algorithm \ref{proc:init4_order_d} is dominated by $O(n_1\cdots n_d)$, showing the well-scalability of the proposed algorithms. 

In fact, the above two algorithms inherit the ideas of    \cite[Alg. C and D]{mao2022several} for the $\ell_0$-constrained problem. The differences  lie in the sparsification: For the non-sparse vectors $\mathbf x_j$'s, \cite{mao2022several} sparsifies them by finding a normalized sparse vector that is closest to $\mathbf x_j$, i.e., solving $\max_{\|\mathbf x\|=1,\|\mathbf x\|_0\leq r}\innerprod{\mathbf x_j}{\mathbf x}$, while here we solve \eqref{prob:sparsify_problem}. Comparing the $\ell_1$-regularized problem    \eqref{prob:str1approx_org_max_l1_regularized} with the $\ell_0$-constrained one \eqref{prob:str1approx_org_max_l0}, such a modification is natural. However, due to the $\ell_1$-regularization term, deriving the approximation bounds is more involved than \cite{mao2022several}, which needs more refined analysis. These will be detailed in the coming subsection.  

\subsection{Approximation bounds analysis}
We first lower bound the sparsification \eqref{prob:sparsify_problem}. Observe that in the algorithms, the vectors to be sparsified are always normalized; this property motivates us to study the following problem with $\omega\geq 0$: 
\begin{equation} \label{prob:min_sum_max_st_sphere}
	\min_{\mathbf x\in\mathbb R^n} ~ f(\mathbf x):=\sum^n_{i=1}\bigxiaokuohao{\bigjueduizhi{ \mathbf{x}(i)}-\omega}_+^2,~{\rm s.t.}~ \mathbf x^{\top}\mathbf x = 1.
\end{equation}
Denote $\xi(n)$ as the minimum of \eqref{prob:min_sum_max_st_sphere}. If $\omega \geq  1/\sqrt{n}$, then taking $ \mathbf{x}(i) = 1/\sqrt{n}$ for each $i$,
it is seen that $\xi(n)=0$. When $\omega<1/\sqrt n$, we have the following estimation.
\begin{lemma}
	\label{lem:min_sum_max_st_sphere}
	Let $0< \omega <1/\sqrt{n}$. Then the minimum $\xi(n)$ of \eqref{prob:min_sum_max_st_sphere} satisfies
	\[
	\xi(n) 
	\geq n\bigxiaokuohao{1/\sqrt{n} - \omega}^2. 
	\]
\end{lemma}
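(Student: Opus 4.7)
The plan is to translate the sphere constraint into a tractable inequality on the soft-thresholded quantities, and then use Cauchy--Schwarz. Set $b_i := (|\mathbf{x}(i)|-\omega)_+ \ge 0$, so that the objective equals $B := \sum_{i=1}^n b_i^2$, and the thing I want to lower bound is $B$.

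The key pointwise observation is that $b_i + \omega \geq |\mathbf{x}(i)|$ in both cases ($|\mathbf{x}(i)| \leq \omega$ gives $b_i=0$ and $\omega \geq |\mathbf{x}(i)|$; $|\mathbf{x}(i)|>\omega$ gives equality). Squaring and summing, and using $\mathbf{x}^\top\mathbf{x}=1$, I get
\[
\sum_{i=1}^n (b_i+\omega)^2 \;=\; B + 2\omega \sum_{i=1}^n b_i + n\omega^2 \;\geq\; \sum_{i=1}^n \mathbf{x}(i)^2 \;=\; 1.
\]
Next I apply Cauchy--Schwarz to the $n$ nonnegative numbers $b_i$, obtaining $\sum_i b_i \leq \sqrt{n}\sqrt{B}$. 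Substituting into the previous display yields
\[
B + 2\omega\sqrt{n}\sqrt{B} + n\omega^2 \;\geq\; 1,
\]
i.e., $(\sqrt{B}+\omega\sqrt{n})^2 \geq 1$. Taking square roots (both sides nonnegative) gives $\sqrt{B} \geq 1 - \omega\sqrt{n}$, and the hypothesis $\omega < 1/\sqrt{n}$ guarantees that $1-\omega\sqrt{n}>0$, so squaring preserves the inequality:
\[
B \;\geq\; (1-\omega\sqrt{n})^2 \;=\; 1 - 2\omega\sqrt{n} + n\omega^2 \;=\; n\bigl(1/\sqrt{n}-\omega\bigr)^2.
\]
Taking the infimum over all feasible $\mathbf{x}$ gives the claim.

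The only subtle step is the sign handling when taking the square root of $(\sqrt{B}+\omega\sqrt{n})^2\geq 1$; this is exactly where the hypothesis $\omega<1/\sqrt{n}$ enters, and it is also why the bound is tight (equality is achieved at $\mathbf{x}(i)=1/\sqrt{n}$ for all $i$, where both the pointwise inequality $b_i+\omega \geq |\mathbf{x}(i)|$ and the Cauchy--Schwarz step become equalities). No further compactness or Lagrange-multiplier argument is needed, which keeps the proof short and self-contained.
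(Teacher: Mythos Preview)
Your proof is correct and takes a genuinely different route from the paper's. The paper proceeds via a KKT/Lagrange-multiplier analysis: it checks that $f$ is differentiable, writes the first-order conditions on the sphere, deduces that at a minimizer the nonzero entries all share the common magnitude $1/\sqrt{k}$ for some $k\le n$, and then verifies that $k\mapsto k(1/\sqrt{k}-\omega)^2$ is nonincreasing on $[1,n]$ to conclude. Your argument bypasses all of this with the pointwise bound $b_i+\omega\ge |\mathbf{x}(i)|$ and a single application of Cauchy--Schwarz, which is shorter and needs no calculus or constraint-qualification checks. The paper's approach, on the other hand, yields the explicit structure of the minimizers (equal-magnitude supports) as a by-product, which your argument does not directly reveal---though you correctly note that tightness at the uniform vector witnesses the sharpness of the bound. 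One small wording point: the hypothesis $\omega<1/\sqrt{n}$ is not really needed to pass from $(\sqrt{B}+\omega\sqrt{n})^2\ge 1$ to $\sqrt{B}+\omega\sqrt{n}\ge 1$ (the left side is already nonnegative); it is needed precisely where you use it next, to ensure $1-\omega\sqrt{n}>0$ so that squaring $\sqrt{B}\ge 1-\omega\sqrt{n}$ is legitimate.
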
 
\begin{proof}
	Note that the objective   $f(\mathbf x)$ is differentiable, whose partial derivative with respect to each entry $\mathbf{x}(i)$ is given by
	\[
	\frac{\partial f }{\partial \mathbf{x}(i)} = 2{\rm sgn}(\mathbf{x}(i))\bigxiaokuohao{ \bigjueduizhi{\mathbf{x}(i)} - \omega  }_+.
	\] 
	The Lagrangian function of \eqref{prob:min_sum_max_st_sphere} is given by $L(\mathbf x) = f(\mathbf x) - \lambda(\mathbf x^{\top}\mathbf x-1)$ with $\lambda\in\mathbb R$, and
	the KKT system of \eqref{prob:min_sum_max_st_sphere} reads as
	\begin{equation}
		\label{eq:kkt_min_sum_max_st_sphere}
		{\rm sgn}(\mathbf{x}(i))\bigxiaokuohao{\bigjueduizhi{\mathbf{x}(i)}-\omega}_+ = \lambda \mathbf{x}(i),~i=1,\ldots,n,~\mathbf x^{\top}\mathbf x= 1.
	\end{equation}
	Multiplying ${\rm sgn}(\mathbf{x}(i))$ on both sides of the first relation of \eqref{eq:kkt_min_sum_max_st_sphere} gives
	\begin{equation}
		\label{eq:kkt_min_sum_max_st_sphere_main}
		\bigxiaokuohao{\bigjueduizhi{\mathbf{x}(i)}-\omega}_+ = \lambda|\mathbf{x}(i)|,~i=1,\ldots,n,~\mathbf x^{\top}\mathbf x= 1.
	\end{equation}
	Since every minimizer of \eqref{prob:min_sum_max_st_sphere} satisfies \eqref{eq:kkt_min_sum_max_st_sphere_main}, we analyze the solution property of \eqref{eq:kkt_min_sum_max_st_sphere_main}. Let $\mathbf x$ solve \eqref{eq:kkt_min_sum_max_st_sphere_main}. If $\bigjueduizhi{\mathbf{x}(i)}>\omega$, then $(\bigjueduizhi{\mathbf{x}(i)}-\omega)_+ = \bigjueduizhi{\mathbf{x}(i)} - \omega>0$, which together with \eqref{eq:kkt_min_sum_max_st_sphere_main} gives  
	\begin{equation}
		\label{eq:proof_min_sum_max_st_sphere_1}
		\lambda>0,~\bigjueduizhi{\mathbf{x}(i)} = \frac{\omega}{1-\lambda},~\forall i \in \{ i\mid \bigjueduizhi{\mathbf{x}(i)}>\omega  \}. 
	\end{equation}
	If
	$\bigjueduizhi{\mathbf{x}(i)} \leq \omega$, then $(\bigjueduizhi{\mathbf{x}(i)}-\omega)_+=0$, which in connection with $\lambda>0$ and \eqref{eq:kkt_min_sum_max_st_sphere_main} yields that
	\begin{equation}
		\label{eq:proof_min_sum_max_st_sphere_2}
		\mathbf{x}(i)=0,~\forall i\in \{ i\mid \bigjueduizhi{\mathbf{x}(i)}\leq \omega  \}. 
	\end{equation}
	Note that if $\mathbf x$ is a minimizer to \eqref{prob:min_sum_max_st_sphere}, then it must obey \eqref{eq:proof_min_sum_max_st_sphere_1} and \eqref{eq:proof_min_sum_max_st_sphere_2}. Let $k$ be the number of nonzero entries of a minimizer $\mathbf x$. Since $\bignorm{\mathbf x}=1$,  the above analysis   implies that      $\bigjueduizhi{\mathbf{x}(i)}= 1/\sqrt{k}\geq \omega, \forall i\in\{ i\mid \bigjueduizhi{\mathbf{x}(i)}>\omega  \}$, and so
	\[
	\xi(n) = \sum^n_{i=1}\nolimits\bigxiaokuohao{\bigjueduizhi{\mathbf{x}(i)}-\omega}_+^2= k\bigxiaokuohao{1/\sqrt{k} -\omega}^2.
	\] 
	We then consider the function $g(y):= y\bigxiaokuohao{1/\sqrt{y} -\omega}^2$, where $1\leq y\leq n$. Since  $\omega <1/\sqrt{n}$, $g^{\prime}(y) = \omega(\omega-1/\sqrt{y})<0$, i.e., $g(\cdot)$ is nonincreasing, leading to that
	\[
	k\bigxiaokuohao{1/\sqrt{k} -\omega}^2 \geq n\bigxiaokuohao{1/\sqrt{n}-\omega}^2,~ k\leq n.
	\]
	The proof has been completed. 
\end{proof}

Let $\lambda_{\max}(\cdot)$ denote the largest singular value of a given matrix. The following   lemmas are important.
\begin{lemma}
	\label{lem:lower_bound_lem:1}
	Let  $A\in\mathbb R^{n\times m}$ be nonzero, where $(\mathbf x,\mathbf y)$ is a normalized singular vector pair corresponding to $\lambda_{\max}(A)$. Denote  $\mathbf x^w := N(\mathbf x,\omega)$ with $N(\cdot,\cdot)$ defined in \eqref{sol:subproblem}. If $\omega <1/\sqrt{n}$, then 
	\[\small
	\bignorm{A^{\top}\mathbf x^{\omega}} \geq (\sqrt{\xi(n)}+\omega)\lambda_{\max}(A)\geq (1-\omega\sqrt{n} + \omega)\lambda_{\max}(A).
	\]
\end{lemma}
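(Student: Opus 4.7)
The plan is to reduce the lower bound on $\|A^{\top}\mathbf x^{\omega}\|$ to a lower bound on the inner product $\langle \mathbf x,\mathbf x^{\omega}\rangle$, and then to exploit the closed-form optimality relation from Lemma \ref{lem:l1_prox_constrained} together with the scalar inequality $\xi(n)\ge n(1/\sqrt n-\omega)^2$ established in Lemma \ref{lem:min_sum_max_st_sphere}.

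First I would observe that because $(\mathbf x,\mathbf y)$ is a normalized singular vector pair corresponding to $\lambda_{\max}(A)$, we have $A\mathbf y=\lambda_{\max}(A)\mathbf x$ with $\|\mathbf y\|=1$. Since $\mathbf x^{\omega}$ is a unit vector (by construction in \eqref{sol:subproblem}), the operator-norm characterization gives
\[
\bignorm{A^{\top}\mathbf x^{\omega}}=\sup_{\bignorm{\mathbf z}=1}\innerprod{\mathbf x^{\omega}}{A\mathbf z}\geq \innerprod{\mathbf x^{\omega}}{A\mathbf y}=\lambda_{\max}(A)\innerprod{\mathbf x^{\omega}}{\mathbf x}.
\]
Hence it suffices to prove $\innerprod{\mathbf x^{\omega}}{\mathbf x}\ge \sqrt{\xi(n)}+\omega$.

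Next I would verify that the hypothesis $\omega<1/\sqrt n$ together with $\|\mathbf x\|=1$ implies $\bignorm{S(\mathbf x,\omega)}\neq 0$: indeed $\max_i|\mathbf{x}(i)|\geq 1/\sqrt n>\omega$. So Lemma \ref{lem:l1_prox_constrained} applies in its nontrivial branch, and plugging the optimizer $\mathbf x^{\omega}=N(\mathbf x,\omega)$ back into \eqref{prob:sparsify_problem} gives the equality
\[
\innerprod{\mathbf x}{\mathbf x^{\omega}}-\omega\bigonenorm{\mathbf x^{\omega}}=\bignorm{S(\mathbf x,\omega)}=\sqrt{\sum_{i=1}^n(|\mathbf x(i)|-\omega)_+^2}.
\]
Because $\|\mathbf x^{\omega}\|=1$, the basic norm inequality $\|\cdot\|_1\geq \|\cdot\|_2$ yields $\bigonenorm{\mathbf x^{\omega}}\geq 1$, hence
\[
\innerprod{\mathbf x}{\mathbf x^{\omega}}\geq \sqrt{\sum_{i=1}^n(|\mathbf x(i)|-\omega)_+^2}+\omega.
\]
The quantity under the square root is exactly the objective of \eqref{prob:min_sum_max_st_sphere} at the unit vector $\mathbf x$, so it is bounded below by $\xi(n)$, giving $\innerprod{\mathbf x}{\mathbf x^{\omega}}\geq \sqrt{\xi(n)}+\omega$.

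Finally I would combine the two displayed inequalities to obtain the first claimed bound $\bignorm{A^{\top}\mathbf x^{\omega}}\geq (\sqrt{\xi(n)}+\omega)\lambda_{\max}(A)$, and apply Lemma \ref{lem:min_sum_max_st_sphere} (which gives $\sqrt{\xi(n)}\geq 1-\omega\sqrt n$ under the assumption $\omega<1/\sqrt n$) to chain on the second inequality. The main subtlety I foresee is not in the algebra but in correctly invoking Lemma \ref{lem:l1_prox_constrained} in the right branch and justifying that the maximization in \eqref{prob:sparsify_problem} is an equality rather than only an upper bound; the other mildly delicate point is the $\|\mathbf x^{\omega}\|_1\geq 1$ step, which is what allows the extra additive $\omega$ on top of $\sqrt{\xi(n)}$ in the final estimate.
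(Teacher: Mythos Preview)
Your proof is correct and follows essentially the same route as the paper's: both reduce to showing $\bignorm{A^{\top}\mathbf x^{\omega}}\geq \lambda_{\max}(A)\innerprod{\mathbf x^{\omega}}{\mathbf x}$ and then combine the optimality identity from Lemma~\ref{lem:l1_prox_constrained} with the bounds $\bigonenorm{\mathbf x^{\omega}}\geq 1$ and $\sqrt{\xi(n)}\geq 1-\omega\sqrt n$. The only cosmetic difference is that you obtain the first inequality directly from the dual-norm characterization $\bignorm{A^{\top}\mathbf x^{\omega}}=\sup_{\|\mathbf z\|=1}\innerprod{A^{\top}\mathbf x^{\omega}}{\mathbf z}\geq \innerprod{A^{\top}\mathbf x^{\omega}}{\mathbf y}$, whereas the paper reaches the same point via a Cauchy--Schwarz step $\bignorm{A^{\top}\mathbf x^{\omega}}\cdot\bignorm{A^{\top}\mathbf x}\geq \innerprod{A^{\top}\mathbf x^{\omega}}{A^{\top}\mathbf x}$ followed by dividing through by $\bignorm{A^{\top}\mathbf x}=\lambda_{\max}(A)$.
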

\begin{proof}
	Let $\lambda_1=\lambda_{\max}(A)>0$ provided that $A\neq 0$. Since $\bignorm{A\mathbf y}=\lambda_1$, $A^{\top}\mathbf x = \lambda_1\mathbf y$ and $A\mathbf y = \lambda_1\mathbf x$, we have
	\begin{eqnarray}
		\bignorm{A^{\top}\mathbf x^{\omega}} &=& \lambda_1^{-1}\bignorm{A^{\top}\mathbf x^{\omega}}\cdot \bignorm{A^{\top} {\mathbf x}}\nonumber\\
		&\geq & \lambda_1^{-1} \innerprod{A^{\top}\mathbf x^{\omega}}{A^{\top} {\mathbf x}} =     \innerprod{A^{\top}\mathbf x^{\omega}}{ {\mathbf y}}\nonumber\\
		&=&  \lambda_1    \innerprod{\mathbf x^{\omega}}{ {\mathbf x}} \nonumber\\
		&=& \lambda_1\bigxiaokuohao{   \innerprod{\mathbf x^{\omega}}{ {\mathbf x}}  - \omega\bigonenorm{\mathbf x^{\omega}}    } + \lambda_1\omega \bigonenorm{\mathbf x^{\omega}}. \label{eq:proof:lem:lower_bound_lem:1}
	\end{eqnarray}
	The assumption     $\omega<1/\sqrt{n}$  with the fact that $\bignorm{\mathbf x}=1$  shows that $\bignorm{S(\mathbf x,\omega)}\neq 0$, which together with the definition of $N(\cdot,\cdot)$ gives that $\mathbf x^{\omega} = S(\mathbf x,\omega)/\bignorm{S(\mathbf x,\omega)}$; and Lemma \ref{lem:l1_prox_constrained} tells us that 
	\begin{eqnarray}
		\innerprod{\mathbf x}{\mathbf x^{\omega}} - \omega\bigonenorm{\mathbf x^{\omega}} &=& \sqrt{ \sum^n_{i=1}\nolimits \bigxiaokuohao{ \bigjueduizhi{\mathbf{x}(i)}-\omega }_+^2 }\nonumber\\
		&\geq& \sqrt{\min_{\mathbf z\in\mathbb R^n,\bignorm{\mathbf z}=1}  { \sum^n_{i=1}\nolimits \bigxiaokuohao{ \bigjueduizhi{\mathbf{z}(i)} - \omega  }_+^2 }}\nonumber\\
		& =&\sqrt{\xi(n)}\geq (1 -\omega\sqrt{n}), \label{eq:proof:lem:lower_bound_lem:2}
	\end{eqnarray}
	where the last inequality follows from Lemma \ref{lem:min_sum_max_st_sphere}. On the other hand, 
	\begin{equation}
		\bigonenorm{\mathbf x^{\omega}} \geq \min_{\mathbf z\in\mathbb R^n,\bignorm{\mathbf z}=1}\nolimits\bigonenorm{\mathbf z} = 1. \label{eq:proof:lem:lower_bound_lem:3}
	\end{equation}
	\eqref{eq:proof:lem:lower_bound_lem:1}, \eqref{eq:proof:lem:lower_bound_lem:2} and \eqref{eq:proof:lem:lower_bound_lem:3} then yield $\bignorm{A^{\top}\mathbf x^{\omega}} \geq (1-\omega\sqrt{n} + \omega)\lambda_{\max}(A)$, as desired.  
\end{proof}

\begin{lemma}
	\label{lem:proc4}
	Let $A\in\mathbb R^{n\times m}$ be nonzero; let $A^{\bar k}$ be the row of $A$ having the largest magnitude. Denote $\mathbf y=(A^{\bar k})^\top\in\mathbb R^m$, $\mathbf x = A\mathbf y/\normalnorm{A\mathbf y}$, and $\mathbf x^w :=N(\mathbf x,\omega)$. If $\omega <1/\sqrt{n}$, then 
	\[
	\bignorm{A^\top\mathbf x^{\omega}} \geq \frac{1-\omega\sqrt n+\omega}{\sqrt n}\bigfnorm{A}.
	\]
\end{lemma}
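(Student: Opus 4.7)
My plan is to mirror the structure of the proof of Lemma \ref{lem:lower_bound_lem:1}, but replace the singular-value relations with the geometric properties of the maximal-energy row. The target bound factors as $(1-\omega\sqrt n+\omega)\cdot\bigfnorm{A}/\sqrt n$, which suggests splitting the argument into two independent parts: the $(1-\omega\sqrt n+\omega)$ factor should come from sparsification (essentially the same reasoning as in Lemma \ref{lem:lower_bound_lem:1}), while the $\bigfnorm{A}/\sqrt n$ factor should come from the choice of $\bar k$ as the index of the row with largest norm.

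First, I would lower-bound $\bignorm{A^{\top}\mathbf x^{\omega}}$ by a Cauchy--Schwarz-type estimate against the specific direction $\mathbf y$. Since $\mathbf x=A\mathbf y/\bignorm{A\mathbf y}$, I can write
\[
\bignorm{A^{\top}\mathbf x^{\omega}}\cdot\bignorm{\mathbf y}\;\geq\;\innerprod{A^{\top}\mathbf x^{\omega}}{\mathbf y}\;=\;\innerprod{\mathbf x^{\omega}}{A\mathbf y}\;=\;\bignorm{A\mathbf y}\,\innerprod{\mathbf x^{\omega}}{\mathbf x},
\]
so
\[
\bignorm{A^{\top}\mathbf x^{\omega}}\;\geq\;\frac{\bignorm{A\mathbf y}}{\bignorm{\mathbf y}}\,\innerprod{\mathbf x^{\omega}}{\mathbf x}.
\]

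Next, I would recycle verbatim the sparsification estimate from the proof of Lemma \ref{lem:lower_bound_lem:1}. Writing $\innerprod{\mathbf x^{\omega}}{\mathbf x}=(\innerprod{\mathbf x^{\omega}}{\mathbf x}-\omega\bigonenorm{\mathbf x^{\omega}})+\omega\bigonenorm{\mathbf x^{\omega}}$, using Lemmas \ref{lem:l1_prox_constrained} and \ref{lem:min_sum_max_st_sphere} for the first group (valid because $\omega<1/\sqrt n$ implies $\bignorm{S(\mathbf x,\omega)}\neq 0$) and the trivial bound $\bigonenorm{\mathbf x^{\omega}}\geq 1$ for the second, yields $\innerprod{\mathbf x^{\omega}}{\mathbf x}\geq 1-\omega\sqrt n+\omega$.

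It then remains to check that $\bignorm{A\mathbf y}/\bignorm{\mathbf y}\geq \bigfnorm{A}/\sqrt n$; this is where the maximal-row choice plays its role and is the only new ingredient compared to Lemma \ref{lem:lower_bound_lem:1}. Two observations suffice: (i) the $\bar k$-th coordinate of $A\mathbf y$ equals $\innerprod{A^{\bar k}}{(A^{\bar k})^{\top}}=\bignorm{A^{\bar k}}^2=\bignorm{\mathbf y}^2$, so $\bignorm{A\mathbf y}\geq\bignorm{\mathbf y}^2$, hence $\bignorm{A\mathbf y}/\bignorm{\mathbf y}\geq\bignorm{\mathbf y}=\bignorm{A^{\bar k}}$; and (ii) since $A^{\bar k}$ is the largest row, $\bignorm{A^{\bar k}}^2\geq\frac{1}{n}\sum_{k=1}^{n}\bignorm{A^k}^2=\frac{1}{n}\bigfnorm{A}^2$. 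Combining the three pieces gives the desired bound.

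I do not expect any essential obstacle; the only subtlety is making sure the inner product $\innerprod{\mathbf x^{\omega}}{\mathbf x}$ is non-negative so the Cauchy--Schwarz step is not vacuous, but this is automatic from the sparsification bound $\innerprod{\mathbf x^{\omega}}{\mathbf x}\geq 1-\omega\sqrt n+\omega>0$ under the hypothesis $\omega<1/\sqrt n$.
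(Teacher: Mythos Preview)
Your proposal is correct and follows essentially the same route as the paper's own proof: lower-bound $\bignorm{A^{\top}\mathbf x^{\omega}}$ by pairing against the direction $\mathbf y/\bignorm{\mathbf y}$, reduce to $\frac{\bignorm{A\mathbf y}}{\bignorm{\mathbf y}}\innerprod{\mathbf x^{\omega}}{\mathbf x}$, handle $\innerprod{\mathbf x^{\omega}}{\mathbf x}$ via the split used in Lemma~\ref{lem:lower_bound_lem:1}, and then use that the $\bar k$-th coordinate of $A\mathbf y$ equals $\bignorm{\mathbf y}^2$ together with the maximality of $\bignorm{A^{\bar k}}$ to get $\bignorm{A\mathbf y}/\bignorm{\mathbf y}\geq \bigfnorm{A}/\sqrt n$. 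The paper presents the last step in squared form but the content is identical.
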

\begin{proof}
	Using the relation $\normalnorm{\mathbf w} = \max_{\normalnorm{\mathbf z}=1}\innerprod{\mathbf w}{\mathbf z}$ for any vector $\mathbf w$,	we have
	\begin{eqnarray*}
		\bignorm{A^\top \mathbf x^w} &=& \max_{\normalnorm{\mathbf z}=1 }\nolimits \innerprod{A^\top\mathbf x^w }{\mathbf z} \nonumber\\
		&\geq& \innerprod{A^\top \mathbf x^w}{\mathbf y/\normalnorm{\mathbf y}} = \innerprod{\mathbf x^{\omega}}{ A\mathbf y/\normalnorm{\mathbf y}} = \frac{\normalnorm{A\mathbf y}}{\normalnorm{\mathbf y}} \innerprod{\mathbf x^{\omega}}{\mathbf x}\nonumber  \\
		&=& \frac{\normalnorm{A\mathbf y}}{\normalnorm{\mathbf y}} \bigxiaokuohao{\innerprod{\mathbf x^{\omega}}{\mathbf x/\normalnorm{\mathbf x}} - \omega\bigonenorm{\mathbf x^{\omega}}} + \omega\frac{\normalnorm{A\mathbf y}}{\normalnorm{\mathbf y}} \bigonenorm{\mathbf x^{\omega}} \nonumber\\
		&\geq& \frac{\normalnorm{A\mathbf y}}{\normalnorm{\mathbf y}} \bigxiaokuohao{1-\omega\sqrt n + \omega}, \label{eq:proof:proc4:1}
	\end{eqnarray*}
	where the last inequality follows from $\mathbf x^{\omega} = S(\mathbf x,\omega)$ due to that $\omega <1/\sqrt n$ and $\normalnorm{\mathbf x}=1$, and from \eqref{eq:proof:lem:lower_bound_lem:2} and \eqref{eq:proof:lem:lower_bound_lem:3}. On the other hand, 
	\begin{eqnarray*}
		\normalnorm{A\mathbf y}^2/\normalnorm{\mathbf y}^2 &=& \sum^n_{k=1}\nolimits  \bigxiaokuohao{A^k\mathbf y}^2/\normalnorm{\mathbf y}^2   \geq \bigxiaokuohao{A^{\bar k}\mathbf y}^2/\normalnorm{\mathbf y}^2    = \normalnorm{A^{\tilde k}}^2 \geq \frac{1}{n}\bigfnorm{A}^2, \label{eq:proof:proc4:2}
	\end{eqnarray*}
	where the last inequality comes from the definition of $A^{\bar k}$. The required result follows. 
\end{proof}

\begin{lemma}
	\label{prop:welldefined30} 
	If $\mathcal A\neq 0$  and $(\mathbf x^{\omega_1}_1,\ldots,\mathbf x^{\omega_d}_d)$ is generated by Algorithms \ref{proc:init3_order_d} or \ref{proc:init4_order_d}, then $A_j\neq 0$ and $\mathbf x^{\omega_j}_j\neq 0$, $1\leq j\leq d$.
\end{lemma}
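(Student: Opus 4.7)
The plan is an induction on $j$ that treats Algorithms \ref{proc:init3_order_d} and \ref{proc:init4_order_d} in parallel, using the lower bounds from Lemmas \ref{lem:lower_bound_lem:1} and \ref{lem:proc4} as the key tools, together with the observation (from Lemma \ref{lem:l1_prox_constrained}) that $N(\cdot,\cdot)$ always returns a unit, hence nonzero, vector.

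For the base case $j=1$, the assumption $\mathcal A\neq 0$ immediately gives $A_1 = \texttt{reshape}(\mathcal A,n_1,\prod^d_{k=2}n_k)\neq 0$, since the reshape operation only permutes entries. Thus $\lambda_{\max}(A_1)>0$ (needed by V1) and $\bigfnorm{A_1}>0$ (needed by V2), so $\mathbf x^*_1$ is well-defined; in V2 one also checks directly that the $\bar k$-th entry of $A_1\mathbf y_1^*$ equals $\bignorm{A_1^{\bar k}}^2>0$, so the normalization in Step 1 makes sense. Then $\mathbf x^{\omega_1}_1 = N(\mathbf x^*_1,\omega_1)$ is a unit vector by the definition \eqref{sol:subproblem} of $N$, and in particular nonzero.

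For the inductive step, suppose $A_{j-1}\neq 0$ and $\mathbf x^{\omega_{j-1}}_{j-1}\neq 0$ for some $2\leq j\leq d$. Since $A_j$ is the reshape of the vector $A_{j-1}^\top\mathbf x^{\omega_{j-1}}_{j-1}$, showing $A_j\neq 0$ reduces to showing this vector has positive norm. Under the standing hypothesis $\omega_{j-1}<1/\sqrt{n_{j-1}}$, Lemma \ref{lem:lower_bound_lem:1} applied inside Algorithm \ref{proc:init3_order_d} gives
\[
\bignorm{A_{j-1}^\top\mathbf x^{\omega_{j-1}}_{j-1}} \geq (1-\omega_{j-1}\sqrt{n_{j-1}}+\omega_{j-1})\,\lambda_{\max}(A_{j-1}),
\]
while Lemma \ref{lem:proc4} applied inside Algorithm \ref{proc:init4_order_d} gives
\[
\bignorm{A_{j-1}^\top\mathbf x^{\omega_{j-1}}_{j-1}} \geq \frac{1-\omega_{j-1}\sqrt{n_{j-1}}+\omega_{j-1}}{\sqrt{n_{j-1}}}\,\bigfnorm{A_{j-1}}.
\]
Each right-hand side is strictly positive, because $\omega_{j-1}<1/\sqrt{n_{j-1}}$ forces the prefactor to be positive and the inductive hypothesis $A_{j-1}\neq 0$ forces $\lambda_{\max}(A_{j-1})>0$ and $\bigfnorm{A_{j-1}}>0$. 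Therefore $A_j\neq 0$, and $\mathbf x^{\omega_j}_j = N(\mathbf x^*_j,\omega_j)$ is a unit vector by Lemma \ref{lem:l1_prox_constrained}. Step 3 of each algorithm (the case $j=d$) is handled by exactly the same inequality applied to $A_{d-1}$, yielding $\mathbf x^{\omega_d}_d\neq 0$.

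The only nontrivial point is the propagation $A_{j-1}\neq 0 \Rightarrow A_{j-1}^\top\mathbf x^{\omega_{j-1}}_{j-1}\neq 0$, and this is precisely what Lemmas \ref{lem:lower_bound_lem:1} and \ref{lem:proc4} deliver under the smallness condition on $\omega_{j-1}$; everything else (reshape preserving nonzeroness, and $N$ producing unit vectors) is immediate from the definitions and poses no real obstacle.
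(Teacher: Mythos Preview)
Your argument is sound but it imports a hypothesis that is not part of the lemma: you invoke Lemmas~\ref{lem:lower_bound_lem:1} and~\ref{lem:proc4}, both of which require $\omega_{j-1}<1/\sqrt{n_{j-1}}$, and you call this a ``standing hypothesis.'' The statement of Lemma~\ref{prop:welldefined30} assumes only $\mathcal A\neq 0$, so as written you have proved a strictly weaker result. (It is true that the lemma is only \emph{applied} later under that smallness condition, but the lemma itself is stated and proved in the paper without it.)

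The paper's proof avoids this by using a more elementary fact that holds for every $\omega_j\geq 0$: whenever $\mathbf x^*_j$ is a unit vector, Lemma~\ref{lem:l1_prox_constrained} gives $\innerprod{\mathbf x^*_j}{\mathbf x^{\omega_j}_j}>0$ directly (either $\bignorm{S(\mathbf x^*_j,\omega_j)}>0$, or the maximum entry $|\mathbf x^*_j(\hat i)|>0$). For Algorithm~\ref{proc:init3_order_d} one then pairs $A_j^\top\mathbf x^{\omega_j}_j$ against $\mathbf y^*_j$ to get
\[
\innerprod{A_j^\top\mathbf x^{\omega_j}_j}{\mathbf y^*_j}=\innerprod{\mathbf x^{\omega_j}_j}{A_j\mathbf y^*_j}=\lambda_{\max}(A_j)\innerprod{\mathbf x^{\omega_j}_j}{\mathbf x^*_j}>0,
\]
and analogously for Algorithm~\ref{proc:init4_order_d} one pairs against $\mathbf y^*_j/\normalnorm{\mathbf y^*_j}$. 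This yields $A_j^\top\mathbf x^{\omega_j}_j\neq 0$ without any restriction on $\omega_j$. Your route through the quantitative lower bounds is heavier machinery than needed here; the simple positivity of $\innerprod{\mathbf x^*_j}{N(\mathbf x^*_j,\omega_j)}$ is all the induction requires.
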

\begin{proof} We only prove the results for Algorithm \ref{proc:init3_order_d} while that for Algorithm \ref{proc:init4_order_d} is analogous. It follows from $\mathcal A\neq 0$ that $ {\mathbf x}_1\neq 0$ and $ {\mathbf y}_1\neq 0$. Lemma \ref{lem:l1_prox_constrained} shows that $\mathbf x^{\omega_1}_1\neq 0$. Then, $\innerprod{A_1^{\top}\mathbf x_1^{\omega}}{\mathbf y_1} = \lambda_{\max}(A_1)\innerprod{\mathbf x_1}{\mathbf x^{\omega}_1}>0$, where the strict inequality also comes from Lemma \ref{lem:l1_prox_constrained}. Thus $A^{\top}_1\mathbf x^{\omega_1}_1\neq 0$, and so $A_2\neq 0$. Similar argument can be applied to show that $A_j\neq 0$ and $\mathbf x^{\omega_j}_j\neq 0$ for each $j$.  
\end{proof}


Now we are in the position to analyze the approximation bound. To simplify notations, we denote
\[
\mathcal A\mathbf x_1\cdots\mathbf x_d:=\innerprod{\mathcal A}{\mathbf x_1\circ\cdots\circ \mathbf x_d}
\]
in the sequel. In fact, it is more reasonable to  derive the approximation bound on $\mathcal A\mathbf x_1\cdots\mathbf x_d$ than on $\mathcal A\mathbf x_1\cdots\mathbf x_d-\sum^d_{j=1}\omega_j\bigonenorm{\mathbf x_j}$, 
because the goal of the studied problem is  to approximate $\bigfnorm{\mathcal A-\lambda\mathbf x_1\circ\cdots\circ\mathbf x_d}$ essentially, 
while it is clear that the larger the   $\mathcal A\mathbf x_1\cdots\mathbf x_d$ is, the better the approximation will be. 
In the following, let $(\tilde{\mathbf x}_1,\ldots,\tilde{\mathbf x}_d)$ be any feasible point to \eqref{prob:str1approx_org_max_l1_regularized} or \eqref{prob:str1approx_org_max_l0}, i.e., 
$(\tilde{\mathbf x}_1,\ldots,\tilde{\mathbf x}_d)\in \bigdakuohao{ (\mathbf x_1,\ldots,\mathbf x_d)\mid \|{\mathbf x_j}\|=1,1\leq j\leq d }$.  
\begin{theorem}
	\label{th:init_theory_bound_3}
	Let $d\geq 3$ and let $(\mathbf x^{\omega_1}_1,\ldots,\mathbf x^{\omega_d}_d)$ be generated by Algorithm \ref{proc:init3_order_d}. If  $\omega_j<1/\sqrt{n_j}$, $1\leq j\leq d$,   then 
	\begin{align*}
		\mathcal A\mathbf x^{\omega_1}_1\cdots\mathbf x^{\omega_d}_d &\geq  \frac{ \prod^d_{j=1}\bigxiaokuohao{1-\omega_j\sqrt{n_j} +\omega_j }   }{ \sqrt{ \prod^{d-1}_{j=2} n_j } } \lambda_{\max}(A_1) \\
		&\geq \frac{ \prod^d_{j=1}\bigxiaokuohao{1-\omega_j\sqrt{n_j} +\omega_j }   }{ \sqrt{ \prod^{d-1}_{j=2} n_j } } \mathcal A\tilde{\mathbf x}_1\cdots\tilde{\mathbf x}_d.
	\end{align*}
\end{theorem}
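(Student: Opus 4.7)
The plan is to chain the singular-vector inequality of Lemma~\ref{lem:lower_bound_lem:1} through the $d-1$ successive reshapings in Algorithm~\ref{proc:init3_order_d}, converting a lower bound on $\lambda_{\max}(A_j)$ into one on $\lambda_{\max}(A_{j-1})$ at each stage via the elementary rank-based estimate $\lambda_{\max}(A_j)\geq\|A_j\|_F/\sqrt{n_j}$. Writing $\alpha_j := 1-\omega_j\sqrt{n_j}+\omega_j$, the first asserted inequality amounts to $\mathcal{A}\mathbf{x}_1^{\omega_1}\cdots\mathbf{x}_d^{\omega_d} \geq \bigl(\prod_{j=1}^d \alpha_j\bigr)\lambda_{\max}(A_1)/\sqrt{\prod_{j=2}^{d-1} n_j}$. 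Lemma~\ref{prop:welldefined30} ensures that every intermediate $A_j$ and $\mathbf x_j^{\omega_j}$ is nonzero, so all normalizations in what follows are well defined.

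I would begin by recording two book-keeping identities that come from the reshape convention: $\mathcal{A}\mathbf x_1^{\omega_1}\cdots\mathbf x_d^{\omega_d} = \innerprod{A_{d-1}^\top\mathbf x_{d-1}^{\omega_{d-1}}}{\mathbf x_d^{\omega_d}}$, and $\|A_j\|_F = \|A_{j-1}^\top\mathbf x_{j-1}^{\omega_{j-1}}\|$ for $2\leq j\leq d-1$. Setting $\mathbf b := A_{d-1}^\top\mathbf x_{d-1}^{\omega_{d-1}}$, the last inner product equals $\|\mathbf b\|\innerprod{\mathbf x_d^*}{\mathbf x_d^{\omega_d}}$; inequalities \eqref{eq:proof:lem:lower_bound_lem:2}--\eqref{eq:proof:lem:lower_bound_lem:3} from the proof of Lemma~\ref{lem:lower_bound_lem:1} (which only require $\|\mathbf x_d^*\|=1$ and $\omega_d<1/\sqrt{n_d}$) give $\innerprod{\mathbf x_d^*}{\mathbf x_d^{\omega_d}}\geq \alpha_d$, while Lemma~\ref{lem:lower_bound_lem:1} applied to $A_{d-1}$ with its top singular-vector pair yields $\|\mathbf b\|\geq \alpha_{d-1}\lambda_{\max}(A_{d-1})$. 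Iterating downward for $j=d-1,\ldots,2$, I would then replace $\lambda_{\max}(A_j)$ by $\|A_j\|_F/\sqrt{n_j} = \|A_{j-1}^\top\mathbf x_{j-1}^{\omega_{j-1}}\|/\sqrt{n_j}$ and apply Lemma~\ref{lem:lower_bound_lem:1} to $A_{j-1}$ to peel off another factor $\alpha_{j-1}$, finally arriving at $\lambda_{\max}(A_1)$ with the cumulative prefactor stated in the theorem.

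The second inequality follows at once by observing that, for any feasible $(\tilde{\mathbf x}_1,\ldots,\tilde{\mathbf x}_d)$, one has $\mathcal A\tilde{\mathbf x}_1\cdots\tilde{\mathbf x}_d = \tilde{\mathbf x}_1^\top A_1(\tilde{\mathbf x}_2\otimes\cdots\otimes\tilde{\mathbf x}_d)\leq \lambda_{\max}(A_1)$, since the Kronecker factor has unit Euclidean norm. The main obstacle is the last-step bound: $\mathbf x_d^*$ is constructed as a mere normalization of $A_{d-1}^\top\mathbf x_{d-1}^{\omega_{d-1}}$ rather than as a top singular vector, so Lemma~\ref{lem:lower_bound_lem:1} is not directly applicable there. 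My fix is to bypass the lemma's singular-vector hypothesis and to reuse only the two generic unit-vector inequalities embedded in its proof; together with $\|\mathbf x_d^{\omega_d}\|_1\geq 1$, these deliver the required $\alpha_d$ factor, closing the telescoping argument.
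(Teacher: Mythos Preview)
Your proposal is correct and follows essentially the same route as the paper's proof: you use the reshape identity $\mathcal A\mathbf x_1^{\omega_1}\cdots\mathbf x_d^{\omega_d}=\langle A_{d-1}^\top\mathbf x_{d-1}^{\omega_{d-1}},\mathbf x_d^{\omega_d}\rangle$, treat the last factor via the unit-vector inequalities \eqref{eq:proof:lem:lower_bound_lem:2}--\eqref{eq:proof:lem:lower_bound_lem:3} (exactly as the paper does, though it cites Lemmas~\ref{lem:l1_prox_constrained} and~\ref{lem:min_sum_max_st_sphere} directly), and then telescope Lemma~\ref{lem:lower_bound_lem:1} together with $\lambda_{\max}(A_j)\geq\|A_j\|_F/\sqrt{n_j}=\|A_{j-1}^\top\mathbf x_{j-1}^{\omega_{j-1}}\|/\sqrt{n_j}$ down to $\lambda_{\max}(A_1)$. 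Your identification of the ``obstacle'' at $j=d$ and its resolution coincide with the paper's treatment; the only cosmetic difference is the ordering convention in the Kronecker product, which is immaterial since the factor has unit norm.
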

\begin{remark}
	To prove the theorem, by noticing the definitions of $A_j$'s and using the Kronecker representation,  we will use the following fact
	\begin{align}\label{eq:relation}
		\mathcal A\mathbf x^{\omega_1}_1\cdots\mathbf x^{\omega_d}_d &= \innerprod{A_1^{\top}\mathbf x^{\omega_1}_1}{\mathbf x^{\omega_d}_d\otimes\cdots\otimes \mathbf x^{\omega_2}_2} = \innerprod{A_2^{\top}\mathbf x^{\omega_2}_2}{\mathbf x^{\omega_d}_d\otimes\cdots\otimes\mathbf x^{\omega_3}_3} \nonumber \\
		&=\cdots = \innerprod{A_{d-1}^{\top}\mathbf x^{\omega_{d-1}}_{d-1}  }{\mathbf x^{\omega_d}_d},
	\end{align}
	where $\otimes$ denotes the Kronecker product. 
\end{remark}
\begin{proof}
	Using the above representation and recalling the definitions of $\mathbf x_j^{\omega_j}$ and $\mathbf x_j^*$ in the algorithm,  we first rewrite $\mathcal A\mathbf x^{\omega_1}_1\cdots\mathbf \mathbf x^{\omega_d}_d $ as $\innerprod{A^{\top}_{d-1}\mathbf x^{\omega_{d-1}}_{d-1}}{\mathbf x^{\omega_d}_d}$. It follows from $\omega_d< 1/\sqrt{n_d}$ and   $\normalnorm{\mathbf x_d^*}=1$  that $S(\mathbf x_d^*,\omega_d)\neq 0$, and so Lemma \ref{lem:l1_prox_constrained} gives $\innerprod{\mathbf x^*_d}{\mathbf x^{\omega_d}_d}-\omega_d\bigonenorm{\mathbf x^{\omega}_d} = \sqrt{ \sum^{n_d}_{i=1} \bigxiaokuohao{\bigjueduizhi{ \mathbf x^*_{d}(i) } - \omega_d}_+^2 }$. Then we have 
	\begin{equation}
		\label{eq:proof:alg_A_lower_bound_d3:1}
		\begin{split}
			\mathcal A\mathbf x^{\omega_1}_1\cdots\mathbf \mathbf x^{\omega_d}_d  & \overset{{\rm by}~ \eqref{eq:relation}}{=}\innerprod{A^{\top}_{d-1}\mathbf x^{\omega_{d-1}}_{d-1}}{\mathbf x^{\omega_d}_d} = \normalnorm{A_{d-1}^{\top}\mathbf x^{\omega_{d-1}}_{d-1}}\innerprod{\mathbf x^*_d}{\mathbf x^{\omega_d}_d}\\
			&~~~~ \geq\bigxiaokuohao{ 1-\omega_d\sqrt{n_d} + \omega_d  } \normalnorm{A_{d-1}^{\top}\mathbf x^{\omega_{d-1}}_{d-1}},
		\end{split}
	\end{equation}
	where  the second equality follows from the definition of $\mathbf x_d^*$, and    Lemma \ref{lem:min_sum_max_st_sphere} together with $\normalnorm{\mathbf x_j^*}=1$ yields the inequality. 
	
	We then build the relation between $\bignorm{A_j^{\top}\mathbf x^{\omega_j}_j}$ and $\bignorm{A_{j-1}^{\top}\mathbf x^{\omega_{j-1}}_{j-1}}$ for $j=d-1,\ldots, 2$.  Lemma \ref{prop:welldefined30} shows that $A_j\neq 0$ . It then follows   from the range of $\omega_j$, the definition of $\mathbf x^{\omega_j}_j$, and Lemma \ref{lem:lower_bound_lem:1} that  $\bignorm{A_{j}^{\top}\mathbf x^{\omega_j}_j} \geq \lambda_{\max}(A_j)(1-\omega_j\sqrt{n_j}+\omega_j)$, and so  
	\begin{equation}
		\begin{split}
			\bignorm{A_j^{\top}\mathbf x^{\omega_j}_j}& \geq \lambda_{\max}(A_j)(1-\omega_j\sqrt{n_j}+\omega_j) \geq \frac{1-\omega_j\sqrt{n_j}+\omega_j}{\sqrt n_j}\bigfnorm{A_j} \\
			&= \frac{1-\omega_j\sqrt{n_j}+\omega_j}{\sqrt n_j}\bignorm{A_{j-1}^{\top}\mathbf x^{\omega_{j-1}}_{j-1}},
		\end{split}
	\end{equation}
	where the second inequality uses that $\lambda_{\max}(A_j)/\bigfnorm{A_j}\geq 1/\sqrt{n_j}$ and the equality follows from that the reshape operation does not change the norm size. Finally, it  follows from the range of $\omega_1$ and Lemma \ref{lem:lower_bound_lem:1} that $\normalnorm{A_1^{\top}\mathbf x^{\omega_1}_1} \geq (1-\omega_1\sqrt{n_1}+\omega_1)\lambda_{\max}(A_1)$. The above analysis together with the simple fact that $\lambda_{\max}(A_1)\geq \mathcal A\tilde{\mathbf x}_1\cdots\tilde{\mathbf x}_d$ gives the desired bound. 
\end{proof}

In particular, if $\omega_j = O(1/\sqrt{n_j})$ where the constant behind the big $O$ is strictly less than $1$, then the order of the ratio is $O\bigxiaokuohao{ \bigxiaokuohao{  \prod^{d-1}_{j=2}n_j    }^{-1/2} }  $, which is of the same order as \cite[Alg. 1]{he2010approximation} in the non-sparse rank-1 approximation setting.

The approximation bound for Algorithm \ref{proc:init4_order_d} is given as follows. 

\begin{theorem}
	\label{th:init_theory_bound_4_general}
	Let $(\mathbf x^{\omega_1}_1,\ldots,\mathbf x^{\omega_d}_d)$ be generated by Algorithm \ref{proc:init4_order_d}. If $\omega_j<1/\sqrt{n_j}$, $1\leq j\leq d$, then
	\begin{small}
		\begin{align*}
			\mathcal A\mathbf x^{\omega_1}_1\cdots\mathbf x^{\omega_d}_d&\geq \frac{\prod^d_{j=1}\bigxiaokuohao{1-\omega_j\sqrt{n_j} + \omega_j}   }{\sqrt{ \prod^{d-1}_{j=1} n_j }}\bigfnorm{\mathcal A} \geq \frac{\prod^d_{j=1}\bigxiaokuohao{1-\omega_j\sqrt{n_j} + \omega_j}   }{\sqrt{ \prod^{d-1}_{j=1} n_j }}\lambda_{\max}(A_1)\\
			& \geq \frac{\prod^d_{j=1}\bigxiaokuohao{1-\omega_j\sqrt{n_j} + \omega_j}   }{\sqrt{ \prod^{d-1}_{j=1} n_j }} \mathcal A\tilde{\mathbf x}_1\cdots\tilde{\mathbf x}_d.
		\end{align*}
	\end{small}
\end{theorem}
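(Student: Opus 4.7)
The plan is to mirror the proof of Theorem \ref{th:init_theory_bound_3}, replacing each invocation of Lemma \ref{lem:lower_bound_lem:1} by Lemma \ref{lem:proc4}, since in Algorithm \ref{proc:init4_order_d} the non-sparse vector $\mathbf x_j^*$ is obtained from the maximum-row rule rather than from SVD. Lemma \ref{lem:proc4} delivers a bound of the shape $\bignorm{A_j^{\top}\mathbf x_j^{\omega_j}}\geq (1-\omega_j\sqrt{n_j}+\omega_j)\bigfnorm{A_j}/\sqrt{n_j}$ at \emph{every} stage. This is precisely what explains, on the one hand, why $\bigfnorm{\mathcal A}$ takes the place of $\lambda_{\max}(A_1)$ in the leading term and, on the other hand, why the denominator picks up an extra factor of $\sqrt{n_1}$ compared with Theorem \ref{th:init_theory_bound_3}.

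First I would use the Kronecker representation \eqref{eq:relation} to write
\[
\mathcal A\mathbf x^{\omega_1}_1\cdots\mathbf x^{\omega_d}_d = \innerprod{A_{d-1}^{\top}\mathbf x^{\omega_{d-1}}_{d-1}}{\mathbf x^{\omega_d}_d} = \normalnorm{A_{d-1}^{\top}\mathbf x^{\omega_{d-1}}_{d-1}}\innerprod{\mathbf x^*_d}{\mathbf x^{\omega_d}_d},
\]
and then, exactly as in \eqref{eq:proof:alg_A_lower_bound_d3:1}, Lemma \ref{lem:l1_prox_constrained} combined with Lemma \ref{lem:min_sum_max_st_sphere} yields $\innerprod{\mathbf x^*_d}{\mathbf x^{\omega_d}_d}\geq 1-\omega_d\sqrt{n_d}+\omega_d$ (using $\omega_d<1/\sqrt{n_d}$ and $\normalnorm{\mathbf x_d^*}=1$).

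Next, for $j=d-1,d-2,\ldots,2$, I would apply Lemma \ref{lem:proc4} to $A_j$, which is nonzero by Lemma \ref{prop:welldefined30} and satisfies the assumption $\omega_j<1/\sqrt{n_j}$, to obtain
\[
\bignorm{A_j^{\top}\mathbf x_j^{\omega_j}}\geq \frac{1-\omega_j\sqrt{n_j}+\omega_j}{\sqrt{n_j}}\bigfnorm{A_j} = \frac{1-\omega_j\sqrt{n_j}+\omega_j}{\sqrt{n_j}}\bignorm{A_{j-1}^{\top}\mathbf x_{j-1}^{\omega_{j-1}}},
\]
where the equality uses that \texttt{reshape} preserves the Frobenius norm. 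A final application of Lemma \ref{lem:proc4} at $j=1$ gives $\bignorm{A_1^{\top}\mathbf x_1^{\omega_1}}\geq (1-\omega_1\sqrt{n_1}+\omega_1)\bigfnorm{\mathcal A}/\sqrt{n_1}$, since $\bigfnorm{A_1}=\bigfnorm{\mathcal A}$. Chaining the bounds telescopes cleanly into the first displayed inequality of the theorem.

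The remaining two inequalities are immediate: $\bigfnorm{\mathcal A}=\bigfnorm{A_1}\geq \lambda_{\max}(A_1)$ since the largest singular value of a matrix is dominated by its Frobenius norm, and $\lambda_{\max}(A_1)\geq \mathcal A\tilde{\mathbf x}_1\cdots\tilde{\mathbf x}_d$ for any unit-norm feasible tuple. There is no substantive obstacle here beyond what was already handled for Theorem \ref{th:init_theory_bound_3}; the only bookkeeping item is ensuring that Lemma \ref{lem:proc4} (rather than Lemma \ref{lem:lower_bound_lem:1}) is invoked at \emph{every} step so that an additional factor $1/\sqrt{n_j}$ is collected also at $j=1$, accounting for the full product $\prod^{d-1}_{j=1}n_j$ under the square root.
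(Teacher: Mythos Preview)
Your proposal is correct and follows essentially the same approach as the paper's own proof: both handle the last factor via \eqref{eq:proof:alg_A_lower_bound_d3:1}, then iterate Lemma \ref{lem:proc4} for $j=d-1,\ldots,1$ using Lemma \ref{prop:welldefined30} and the norm-preserving property of \texttt{reshape}. Your explicit treatment of the two trailing inequalities ($\bigfnorm{\mathcal A}\geq\lambda_{\max}(A_1)\geq\mathcal A\tilde{\mathbf x}_1\cdots\tilde{\mathbf x}_d$) is in fact slightly more detailed than the paper, which simply says ``combining the above analysis gives the result.''
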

\begin{proof}
	Similar to the proof of Theorem \ref{th:init_theory_bound_3}, the definition of $\mathbf x^{\omega_d}_d$ shows that
	\begin{equation*}
		\label{eq:proof:alg_B_lower_bound_d3:1}
		\mathcal A\mathbf x^{\omega_1}_1\cdots\mathbf \mathbf x^{\omega_d}_d   =\innerprod{A^{\top}_{d-1}\mathbf x^{\omega_{d-1}}_{d-1}}{\mathbf x^{\omega_d}_d}  \geq\bigxiaokuohao{ 1-\omega_d\sqrt{n_d} + \omega_d  } \normalnorm{A_{d-1}^{\top}\mathbf x^{\omega_{d-1}}_{d-1}}. 
	\end{equation*}
	For $j=d-1,\ldots,2$,  since $A_j$'s are nonzero by Lemma \ref{prop:welldefined30},   according to Lemma \ref{lem:proc4}, the definition of $\mathbf x^{\omega_j}_j$, and the range of $\omega_j$, we have
	\begin{align*}
		\label{eq:proof:alg_B_lower_bound_d3:2}
		\normalnorm{A_{j}^{\top}\mathbf x^{\omega_{j}}_{j}} \geq \frac{1-\omega_j\sqrt{n_j}+\omega_j}{\sqrt{n_j}}\bigfnorm{A_j} =  \frac{1-\omega_j\sqrt{n_j}+\omega_j}{\sqrt{n_j}} \bignorm{A_{j-1}^{\top}\mathbf x^{\omega_{j-1}}_{j-1}}.
	\end{align*}
	Finally, again by Lemma \ref{lem:proc4}, the definition of $\mathbf x^{\omega_1}_1$, and the range of $\omega_1$, we obtain $\bignorm{A_1^{\top}\mathbf x^{\omega_1}_1} \geq \frac{1-\omega_1\sqrt{n_1}+\omega_1}{\sqrt{n_1}}\bigfnorm{\mathcal A}$. Combining the above analysis gives the result.  
\end{proof}

\begin{remark}
	
	The analysis in this section suggests that for the regularization parameters, we can choose $\omega_j<1/\sqrt{n_j}$. Empirically, we find that $\omega_j$ closing to $1/\sqrt{n_j}$ gives better results, and in the experiments we always set $\omega_j = 1/\sqrt{n_j}-10^{-5}$.
\end{remark}

%

\section{Preliminary Numerical Examples}\label{sec:numer_experiments}
All the   computations are conducted on an Intel i7  CPU desktop computer with 32 GB of RAM. The supporting software is Matlab R2019b.   
The tensors are given by 
$\mathcal A = \sum^{10}_{i=1} \mathbf x_{1,i}\circ\cdots\circ\mathbf x_{d,i} \in\mathbb R^{n_1\times\cdots\times n_d}$, 
where the vectors are first randomly drawn from the normal distribution, and then some of the entries are randomly set to be zero. The sparsity ratio of the tensor is denoted as $sr$.   We set $d=4$. For each case, we randomly generated $50$ instances, and the   averaged results are presented. To be more stable, we actually use  $\mathcal A/\|\mathcal A\|_{\infty}$ as the data tensor, and multiply the results by $\|\mathcal A\|_{\infty}$ after the computation; here $\|\cdot\|_{\infty}$ denotes the largest entry in magnitude. 
We always set $\omega_j = 1/\sqrt{n_j} - 10^{-5}$ for the algorithms.  The value
$v^{\rm ub} := \min\{\lambda_{\max}(A_{(1)}),\ldots,\lambda_{\max}(A_{(d)})  \}$
is used as an upper bound, where $A_{(j)} $ is  the $j$-mode unfolding of $\mathcal A$. 

\begin{figure}[h] 
	\centering
	\includegraphics[height=3.5cm,width=13cm]{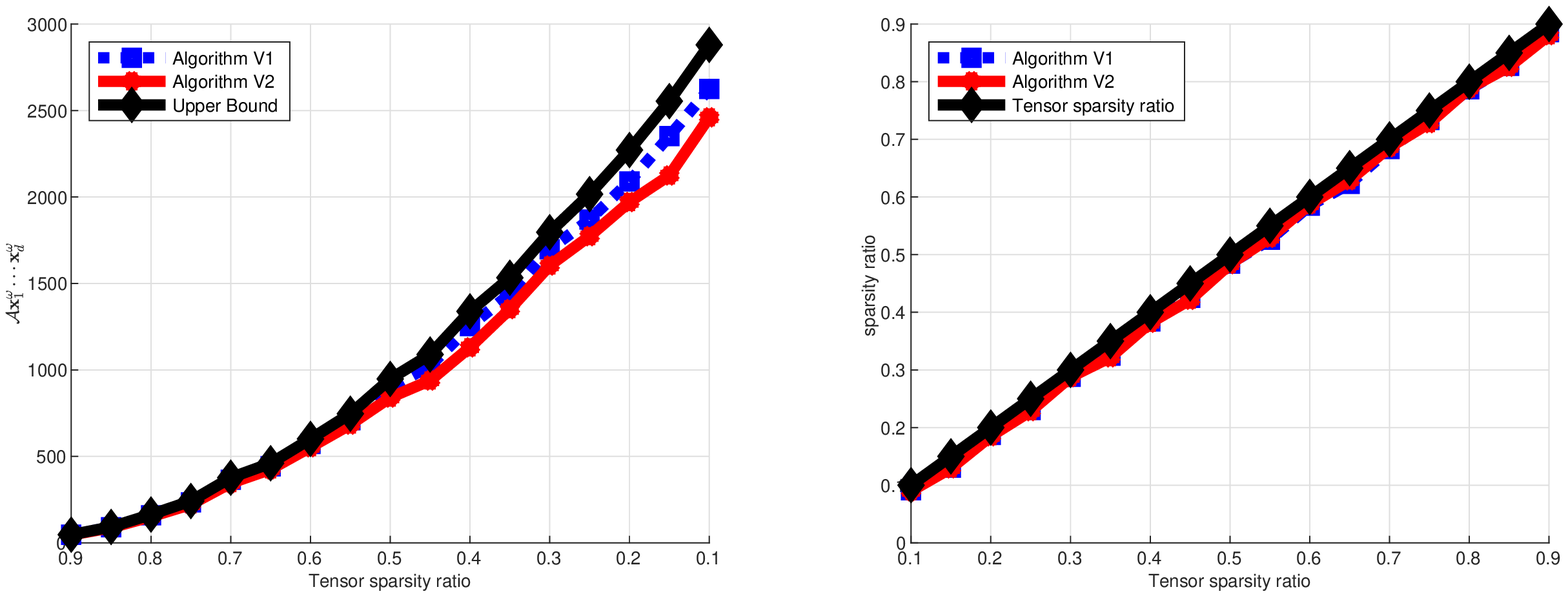}
	\caption{Comparisons of Algorithms \ref{proc:init3_order_d}  and \ref{proc:init4_order_d} 
		when $d=4$, $n_j=50$, and the sparsity ratio of the tensor $sr$ varies from $10\%$ to $90\%$. Left panel:  $\mathcal A\mathbf x^{\omega_1}_1\cdots\mathbf x^{\omega_d}_d$ versus the tensor sparsity ratio $sr$; right panel: the sparsity ratio of the output vectors versus the true tensor sparsity ratio $sr$.} 
	\label{fig:vary_sr} 
\end{figure}

We first fix $n_j=50$ for each $j$ and let  the sparsity ratio of the tensor   $sr$ vary from    $10\%$ to $90\%$. The results are plotted in Fig. \ref{fig:vary_sr}. The results of Algorithm \ref{proc:init3_order_d} is in blue with square markers  while those of Algorithm \ref{proc:init4_order_d} is in red with star markers. The values $\mathcal A\mathbf x_1^{\omega_1}\cdots\mathbf x_d^{\omega_d}$ are plotted in the left panel, from which we observe that both algorithms perform well, as the results are close to the upper bound, which is in black with diamond markers. Algorithm \ref{proc:init3_order_d} is better because it is based on SVDs, which retain more information in the non-sparse candidate vectors $\mathbf x_j^*$.  The sparsity ratios of $\mathbf x_j^{\omega_j}$'s are plotted in the right panel, from which we observe that both algorithms are quite close to the true sparsity ratios, which is in black with diamond markers. This confirms the theoretical suggestion on choosing the regularization parameters in the previous section.

\begin{figure}[h] 
	\centering
	\includegraphics[height=3.5cm,width=13cm]{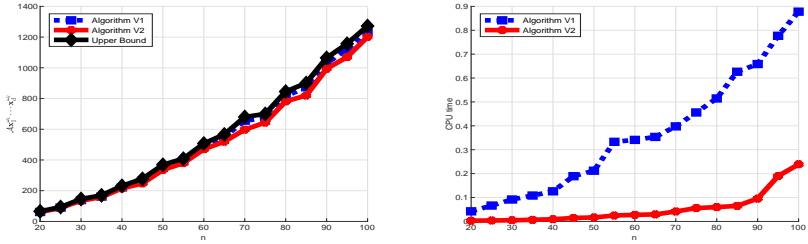}
	\caption{Comparisons of Algorithms \ref{proc:init3_order_d}  and \ref{proc:init4_order_d} 
		when $d=4$, $sr=70\%$, and $n_j$ varies from $20$ to $100$. Left panel:  $\mathcal A\mathbf x^{\omega_1}_1\cdots\mathbf x^{\omega_d}_d$ versus $n$; right panel: CPU time versus $n$.} 
	\label{fig:vary_n} 
\end{figure}

We then fix $sr=70\%$ and let $n_j$'s vary from $20$ to $100$. The results are plotted in Fig. \ref{fig:vary_n}, where the left panel still shows $\mathcal A\mathbf x_1^{\omega_1}\cdots\mathbf x_d^{\omega_d}$ while the right one shows the CPU time. Considering $\mathcal A\mathbf x_1^{\omega_1}\cdots\mathbf x_d^{\omega_d}$, both algorithms still perform well when $n$ changes, and considering the CPU time, Algorithm \ref{proc:init4_order_d} is significantly better
because it does not need to compute SVDs.

\begin{figure}[h] 
	\centering
	\subfigure[$\mathcal A\mathbf x_1^{iter}\cdots \mathbf x_d^{iter}$ and CPU time.]{
		\label{fig:d3_approx} 
		\includegraphics[height=3.5cm,width=13cm]{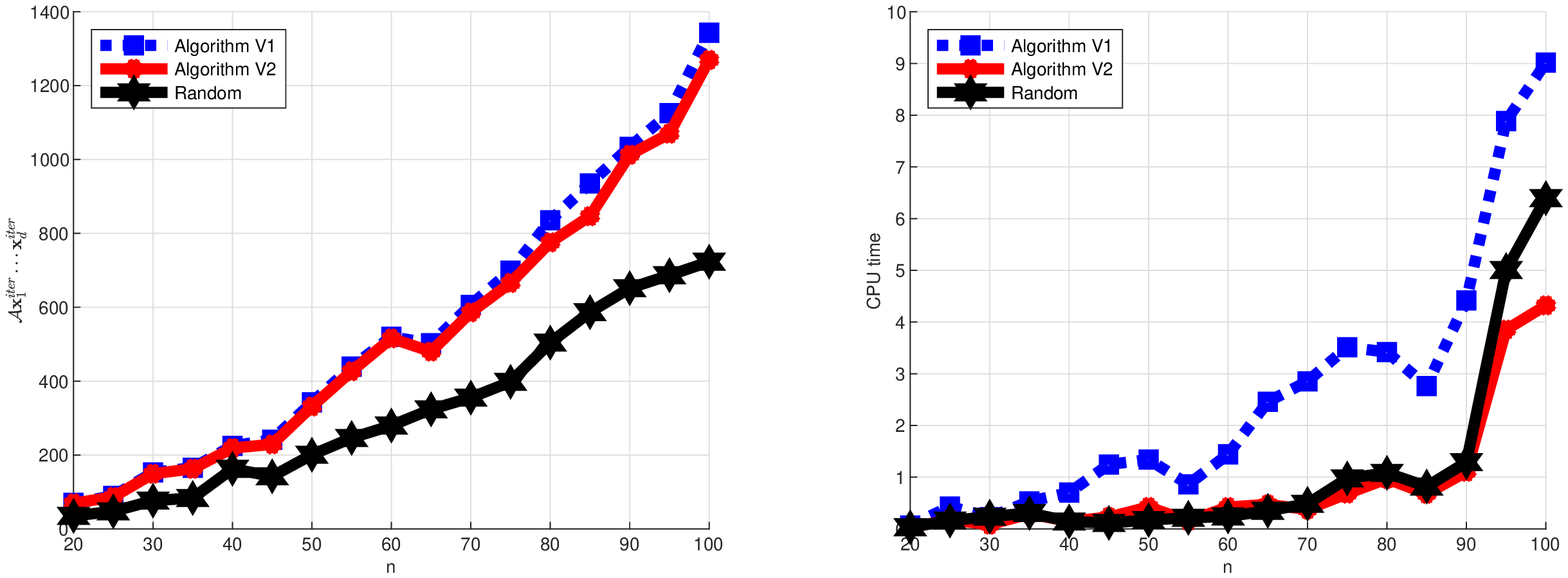}}\\
	\subfigure[Sparsity ratio of the output vectors.]{
		\label{fig:d4_approx}
		\includegraphics[height=3.5cm,width=6cm]{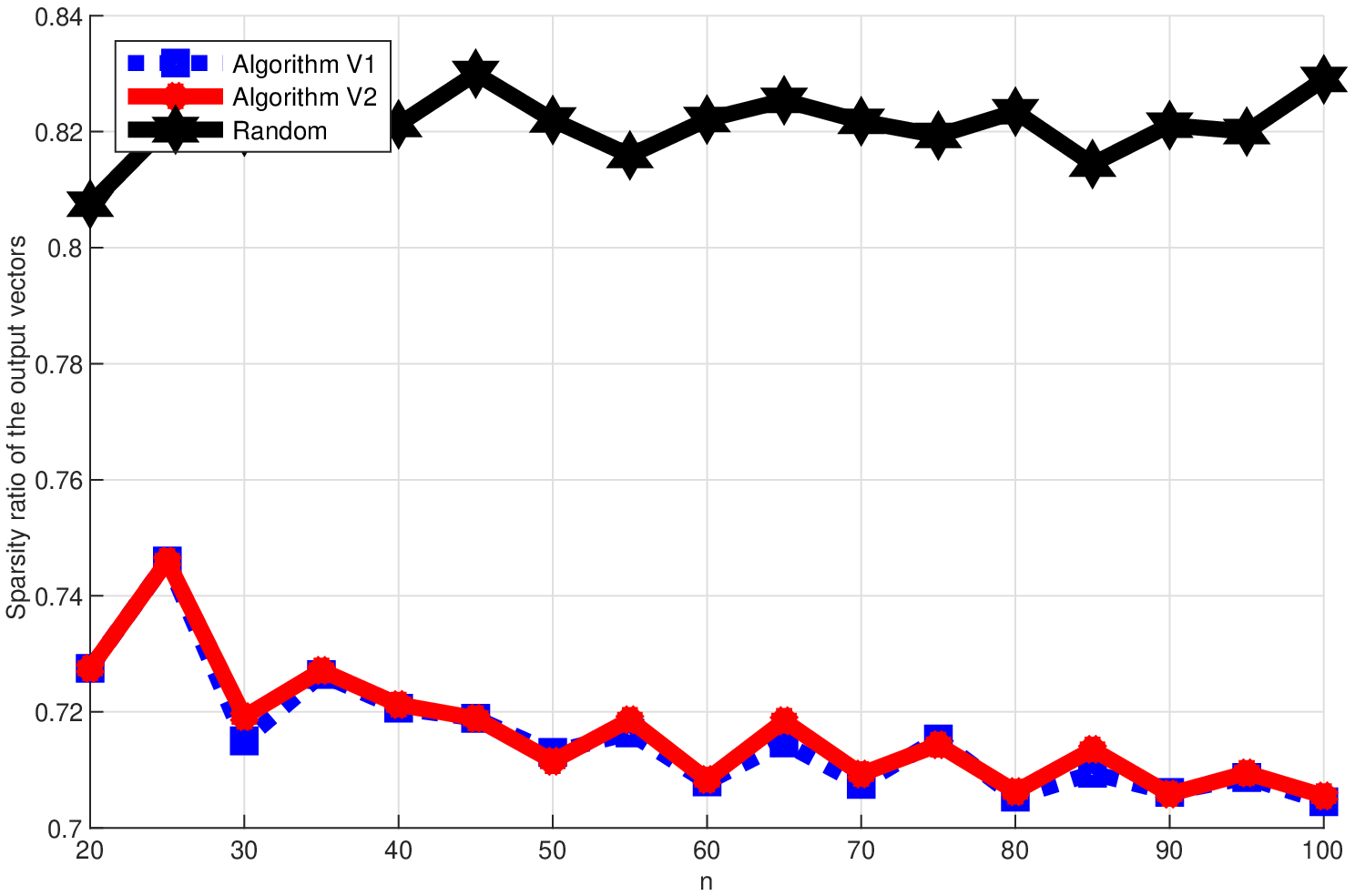}}\\   
	\caption{Comparisons of Algorithms \ref{proc:init3_order_d} and \ref{proc:init4_order_d} as initializations when $d=4$, $sr=70\%$, and $n_j$ varies from $20$ to $100$.} 
	\label{fig:iter} 
\end{figure}

Finally, we   compare Algorithms \ref{proc:init3_order_d} and \ref{proc:init4_order_d} as initialization  procedures for the alternating maximization method (AMM) for solving \eqref{prob:str1approx_org_max_l1_regularized}. AMM is stopped if the distance between the successive two iterative points is smaller than $10^{-6}$.  AMM initialized by the random initialization is used as a baseline, which is generated as $N( \mathbf x/\|\mathbf x\|,\omega )$ where $\mathbf x$ is randomly drawn from the normal distribution. We plot the value $\mathcal A\mathbf x_1^{iter}\cdots\mathbf x^{iter}_d$ ($\mathbf x^{iter}_j$'s are outputted by AMM), the CPU time (counting both that of the approximation algorithm and AMM), and the sparsity ratio of the output vectors in Fig. \ref{fig:iter}. From the first subfigure, we see that the output value of AMM initialized  by Algorithm \ref{proc:init3_order_d} is the best one, followed by Algorithm \ref{proc:init4_order_d}, and both two are far more better than that initialized by the random initialization. From the second subfigure, we see that AMM initialized by  Algorithm \ref{proc:init4_order_d} is the most efficient one, followed by that initialized by the random initialization and that initialized by Algorithm \ref{proc:init3_order_d}. We do not plot the number of iterations as all the cases take about $4$ to $6$ iterations to converge.   From the third subfigure, we observe that the sparsity ratios of the output vectors of AMM initialized by Algorithm \ref{proc:init3_order_d} and \ref{proc:init4_order_d} are close to the true sparsity ratio $70\%$, which are far more better than AMM initialized by the random initialization. 

In summary, the above experiments preliminarily demonstrate the effectiveness and efficiency of the proposed algorithms, and confirm the theoretical suggestion on choosing the regularization parameters.

\section{Conclusions}\label{sec:conclusions}
By extending the idea of multilinear relaxation \cite{he2010approximation,mao2022several}, we devised two approximation algorithms for $\ell_1$-regularized rank-$1$ approximation to tensors. The algorithms are easily implemented and well scalable.  For a $d$-th order tensor, $\frac{ \prod^d_{j=1}\bigxiaokuohao{1-\omega_j\sqrt{n_j} +\omega_j }   }{ \sqrt{ \prod^{d-1}_{j=2} n_j } }$-- and $\frac{ \prod^d_{j=1}\bigxiaokuohao{1-\omega_j\sqrt{n_j} +\omega_j }   }{ \sqrt{ \prod^{d-1}_{j=1} n_j } }$--lower bounds were established when $\omega_j < 1/\sqrt{n_j}$, where $\omega_j$'s are the regularization parameters.     Numerical examples were provided to verify the algorithms and the effectiveness of setting $\omega_{j}=1/\sqrt{n_j}-10^{-5}$.  We also remark that similar ideas of approximation algorithms can be designed for the $\ell_0$-regularized and $\ell_1$-constrained cases, while the key is how to analyze their lower bounds.

{\scriptsize\section*{Acknowledgement}  This work was supported by the National Natural  Science Foundation of China  Grant 12171105, the    Fok Ying Tong Education Foundation Grant 171094, and the  special foundation for Guangxi Ba Gui Scholars. All data generated or analysed during this study are included in this published article.}

\bibliographystyle{plain}
\bibliography{../tensor,../robust,../sparse_pca}

\end{document}